\documentclass{article}

\usepackage[english]{babel}\usepackage[latin1]{inputenc}\usepackage[T1]{fontenc}
\usepackage{mathdots,graphicx,amsfonts,amssymb,amsmath,amsthm,mathrsfs,enumitem,color,mathtools}
\usepackage[top=2cm,bottom=3cm,left=2cm,right=2cm]{geometry}
\usepackage{sectsty}\sectionfont{\normalsize}\subsectionfont{\normalsize}

\usepackage{tikz}
\usetikzlibrary{matrix, calc}
\usetikzlibrary{fit}

\newtheorem{theorem}{Theorem}[section]

\newtheorem{proposition}{Proposition}[section]
\newtheorem{lemma}{Lemma}[section]
\newtheorem{corollary}{Corollary}[section]
\theoremstyle{definition}
\newtheorem{definition}{Definition}[section]
\newtheorem{remark}{Remark}[section]

\numberwithin{equation}{section}
\numberwithin{figure}{section}
\numberwithin{table}{section}

\renewcommand{\epsilon}{\varepsilon}
\renewcommand{\i}{{\rm i}}
\newcommand{\e}{{\rm e}}

\newcommand{\hh}{{\boldsymbol h}}
\newcommand{\ii}{{\boldsymbol i}}
\newcommand{\jj}{{\boldsymbol j}}
\newcommand{\kk}{{\boldsymbol k}}

\newcommand{\mm}{{\boldsymbol m}}
\newcommand{\nn}{{\boldsymbol n}}
\newcommand{\pp}{{\boldsymbol p}}

\newcommand{\uu}{{\boldsymbol u}}
\newcommand{\vv}{{\boldsymbol v}}
\newcommand{\btheta}{\boldsymbol\theta}

\newcommand{\bz}{\mathbf 0}
\newcommand{\bu}{\mathbf 1}
\newcommand{\ee}{{\mathbf e}}
\newcommand{\xx}{{\boldsymbol x}}

\renewcommand{\ss}{{\boldsymbol s}}
\renewcommand{\tt}{{\boldsymbol t}}

\newcommand{\rd}{\color{red}}

\newcommand{\bl}{\color{blue}}
\newcommand{\bk}{\color{black}}

\begin{document}

\title{Tensor product of GLT sequences}
\author{Carlo Garoni\\
\footnotesize Department of Mathematics, University of Rome Tor Vergata, Rome, Italy (garoni@mat.uniroma2.it)}
\date{}
\maketitle

\begin{abstract}
The theory of generalized locally Toeplitz (GLT) sequences is an apparatus for computing the spectral and singular value distribution of sequences of matrices that possess a (possibly hidden) Toeplitz-like structure. Sequences of this kind, which are known as GLT sequences, arise in several applications, including the discretization of differential and integral equations.
Associated with any GLT sequence is a special function called symbol. In this paper, we prove that, if $\{A_{n,1}\}_n,\ldots,\{A_{n,d}\}_n$ are GLT sequences with symbols $\kappa_1,\ldots,\kappa_d$, then their tensor (Kronecker) product $\{A_{n,1}\otimes\cdots\otimes A_{n,d}\}_n$ is a GLT sequence with symbol $\kappa_1\otimes\cdots\otimes\kappa_d$, up to suitable permutation matrices that only depend on the dimensions of the involved matrices $A_{n,1},\ldots,A_{n,d}$.
The permutation matrices in question are explicitly defined through a recursive formula that allows for their algorithmic computation.
Some applications of the presented result are discussed.

\smallskip

\noindent{\em Keywords:} generalized locally Toeplitz sequences, tensor (Kronecker) product, spectral and singular value distribution, approximating classes of sequences

\smallskip

\noindent{\em 2010 MSC:} 15B05, 15A69, 15A18
\end{abstract}

{\footnotesize\tableofcontents}

\section{Introduction}\label{intro}

Throughout this paper, a sequence of matrices is, by definition, a sequence of the form $\{A_n\}_n$, where $n$ varies in some infinite subset of $\mathbb N=\{1,2,3,\ldots\}$ and $A_n$ is a matrix of size $d_n\times e_n$ such that both $d_n$ and $e_n$ tend to $\infty$ as $n\to\infty$.
The theory of generalized locally Toeplitz (GLT) sequences is an apparatus for computing the spectral and singular value distribution of sequences of matrices that possess a (possibly hidden) Toeplitz-like structure. Sequences of this kind, which are known as GLT sequences, arise in several applications, including the discretization of differential and integral equations.
Nowadays, the theory of GLT sequences is a fairly extensive research area with numerous applications.
For readers who are new to the subject, we recommend the introduction \cite{GLT-intro} and the six-page conference paper \cite{aip}.
For a comprehensive exposition of the topic, we refer to the books \cite{GLTbookI,GLTbookII} and the book-like papers \cite{rg,bgR,bg,bgd}.
Recent noteworthy developments not published in the previous references include the identification between spaces of GLT sequences and function spaces \cite{b2017,krs2022}, the characterization of diagonal matrix sequences enjoying a spectral distribution in terms of GLT sequences \cite{b2019}, the spectral distribution result for non-Hermitian perturbations of Hermitian GLT sequences \cite{b2020}, the GLT classification of diagonal sampling matrix sequences obtained from quasi-uniform samples of almost everywhere continuous functions \cite{griglie_a.u.}, the derivation of a ``normal form'' for GLT sequences \cite{normal_form}, and the successful application to GLT sequences of both matrix-less spectral approximation methods \cite{bit2025} and block Jacobi/Gauss--Seidel preconditioning \cite{GLH}.
It is also worth noting that, as demonstrated by recent research, the spectral distribution of a GLT sequence has significant practical implications. For example, suppose that $\{A_n\}_n$ is a GLT sequence resulting from the discretization of a differential equation $\mathscr Au=f$ through a given numerical method. Then, the spectral distribution of $\{A_n\}_n$ can be used to measure the accuracy of the method in approximating the spectrum of the differential operator $\mathscr A$ \cite{DavideCalcolo2021}, to establish whether the method preserves the so-called average spectral gap \cite{DavideCalcolo2018}, or to formulate analytical predictions for the eigenvalues of both $A_n$ and $\mathscr A$ \cite{Tom-paper}. Moreover, the spectral distribution of $\{A_n\}_n$ can be exploited to design efficient iterative solvers for linear systems with matrix $A_n$ and to analyze/predict their performance; see \cite{BeKu,Kuij-SIREV} for accurate convergence estimates of Krylov methods based on the spectral distribution and \cite[p.~3]{GLTbookI} for more details on this subject.

Associated with any GLT sequence $\{A_n\}_n$ is a special function $\kappa$ called symbol.
The importance of the symbol $\kappa$ lies in the fact that it 
always describes the singular value distribution of $\{A_n\}_n$ and, in many cases of interest, it also describes the spectral distribution of $\{A_n\}_n$.
Given $d$ GLT sequences $\{A_{n,1}\}_n,\ldots,\{A_{n,d}\}_n$ with symbols $\kappa_1,\ldots,\kappa_d$, respectively, we can consider their tensor (Kronecker) product $\{A_{n,1}\otimes\cdots\otimes A_{n,d}\}_n$ and the tensor product of their symbols $\kappa_1\otimes\cdots\otimes\kappa_d$; see Section~\ref{sec:gptp} for the corresponding definitions. In the main result of this paper (Theorem~\ref{thm:GLTotimesGLT=GLT}), we prove that $\{A_{n,1}\otimes\cdots\otimes A_{n_d}\}_n$ is a GLT sequence with symbol $\kappa_1\otimes\cdots\otimes\kappa_d$, up to suitable permutation matrices that only depend on the dimensions of the involved matrices $A_{n,1},\ldots,A_{n,d}$. 
We also provide for the permutation matrices in question an explicit recursive definition that allows for their algorithmic computation; see Definition~\ref{Gamma(sigma)}.
It is worth noting that the proof of Theorem~\ref{thm:GLTotimesGLT=GLT} requires several preliminary results, some of which are significant enough to be considered as further main results of this paper in addition to Theorem~\ref{thm:GLTotimesGLT=GLT}; we expressly refer to Theorems~\ref{a.c.s.otimes}, \ref{thm:TotimesT=T}, \ref{thm:DotimesD=D} and Corollary~\ref{a.c.s.otimes-d}.

The paper is organized as follows.
In Section~\ref{sec:overview}, we give an overview of the theory of GLT sequences with a focus on the results that we need in this paper.
In Section~\ref{sec:gptp}, we collect some general properties of tensor products to be used in later sections.
In Section~\ref{sussec}, we revisit the notion of sparsely unbounded (s.u.)\ sequences of matrices in connection with tensor products.
In Sections~\ref{sec:tpacs}--\ref{sec:tpdsm}, we prove three tensor-product properties, one for the so-called approximating classes of sequences (a.c.s.), one for Toeplitz matrices and one for diagonal sampling matrices; these properties will be crucial in the proof of Theorem~\ref{thm:GLTotimesGLT=GLT}.
In Section~\ref{sec:tpGLT}, we state and prove Theorem~\ref{thm:GLTotimesGLT=GLT}.
In Section~\ref{sec:a}, we discuss some applications of Theorem~\ref{thm:GLTotimesGLT=GLT}.
In Section~\ref{sec:c}, we draw conclusions and suggest a possible future line of research.

\section{Overview of the theory of GLT sequences}\label{sec:overview}

In this section, we give an overview of the theory of GLT sequences.
For conciseness purposes, we only present the results that we need in this paper.
For a comprehensive exposition of the topic, see \cite{rg,bgR,bg,bgd,GLTbookI,GLTbookII}.
For an introduction to the subject, we recommend \cite{GLT-intro,aip}.

\subsection{Multi-index notation}\label{min}

A multi-index $\ii$ of length $d$, also called a $d$-index, is a vector in $\mathbb Z^d$; its length $d$ is denoted by $|\ii|$ and its components by $i_1,i_2,\ldots,i_d$.
$\bz$ and $\bu$ are the vectors of all zeros and all ones, respectively (their size will be clear from the context).
If $\mm$ is a $d$-index, we set $N(\mm)=m_1m_2\cdots m_d$.
A multi-index $\mm$ is said to be positive if its components are positive.
A sequence of multi-indices is a sequence of the form $\{\mm=\mm(n)\}_n$, where $\mm(n)$ is a multi-index for every $n$ and the length $|\mm(n)|$ is the same for all $n$.
If $\{\mm=\mm(n)\}_n$ is a sequence of positive multi-indices, we say that $\mm\to\infty$ as $n\to\infty$ if $\min(\mm)\to\infty$ as $n\to\infty$.
If $\hh,\kk$ are $d$-indices, an inequality such as $\hh\le\kk$ means that $h_i\le k_i$ for all $i=1,\ldots,d$.
If $\hh,\kk$ are $d$-indices such that $\hh\le\kk$, the $d$-index range $\{\hh,\ldots,\kk\}$ is the set $\{\ii\in\mathbb Z^d:\hh\le\ii\le\kk\}$. We assume for this set the standard lexicographic ordering:
\[ \Bigl[\ \ldots\ \bigl[\ [\ (i_1,\ldots,i_d)\ ]_{i_d=h_d,\ldots,k_d}\ \bigr]_{i_{d-1}=h_{d-1},\ldots,k_{d-1}}\ \ldots\ \Bigr]_{i_1=h_1,\ldots,k_1}. \]
For instance, in the case $d=2$ the ordering is
\begin{align*}
&(h_1,h_2),\,(h_1,h_2+1),\,\ldots,\,(h_1,k_2),\,(h_1+1,h_2),\,(h_1+1,h_2+1),\,\ldots,\,(h_1+1,k_2),\\
&\ldots\,\ldots\,\ldots,\,(k_1,h_2),\,(k_1,h_2+1),\,\ldots,\,(k_1,k_2).
\end{align*}
When a $d$-index $\ii$ varies in a $d$-index range $\{\hh,\ldots,\kk\}$ (this is often written as $\ii=\hh,\ldots,\kk$), it is understood that $\ii$ varies from $\hh$ to $\kk$ following the lexicographic ordering. For instance, if $\mm$ is a positive $d$-index and $\xx=[x_\ii]_{\ii=\bu}^\mm$, then $\xx$ is a vector of size $N(\mm)$ whose components $x_\ii$, $\ii=\bu,\ldots,\mm$, are ordered in accordance with the lexicographic ordering: the first component is $x_\bu=x_{(1,\ldots,1,1)}$, the second component is $x_{(1,\ldots,1,2)}$, and so on until the last component, which is $x_\mm=x_{(m_1,\ldots,m_d)}$. Similarly, if $X=[x_{\ii\jj}]_{\ii,\jj=\bu}^\mm$, then $X$ is an $N(\mm)\times N(\mm)$ matrix whose components are indexed by a pair of $d$-indices $\ii,\jj$, both varying in $\{\bu,\ldots,\mm\}$ following the lexicographic ordering.
If $\hh,\kk$ are $d$-indices such that $\hh\le\kk$, the notation $\sum_{\ii=\hh}^\kk$ indicates the summation over all $\ii$ in $\{\hh,\ldots,\kk\}$.
Operations involving $d$-indices (or general vectors with $d$ components) that have no meaning in the vector space $\mathbb R^d$ must always be interpreted in the componentwise sense. For instance, $\jj\hh=(j_1h_1,\ldots,j_dh_d)$, $\ii/\jj=(i_1/j_1,\ldots,i_d/j_d)$, etc.
For all $d$-indices $\ii,\jj$, we define $\delta_{\ii\jj}=1$ if $\ii=\jj$ and $\delta_{\ii\jj}=0$ otherwise.

\subsection{Singular value and spectral distribution of a sequence of matrices}\label{sec:distr}

Let $\mu_k$ be the Lebesgue measure in $\mathbb R^k$. Throughout this paper, all terminology from measure theory (such as ``measurable set'', ``measurable function'', ``a.e.'', etc.)\ always refers to the Lebesgue measure. A matrix-valued function $f:D\subseteq\mathbb R^k\to\mathbb C^{s\times t}$ is said to be measurable (respectively, continuous, continuous a.e., in $L^1(D)$, etc.)\ if its components $f_{ij}:D\to\mathbb C$, $i=1,\ldots,s$, $j=1,\ldots,t$, are measurable (respectively, continuous, continuous a.e., in $L^1(D)$, etc.). 
We denote by $C_c(\mathbb C)$ (respectively, $C_c(\mathbb R)$) the space of complex-valued functions defined on $\mathbb C$ (respectively, $\mathbb R$) with bounded support.
For every $x,y\in\mathbb R$, we define $x\wedge y=\min(x,y)$.
The singular values of a matrix $A\in\mathbb C^{m\times n}$ are denoted by $\sigma_i(A)$, $i=1,\ldots,m\wedge n$, and the eigenvalues of a matrix $A\in\mathbb C^{m\times m}$ are denoted by $\lambda_i(A)$, $i=1,\ldots,m$. 
Recall that a sequence of matrices is, by definition, a sequence of the form $\{A_n\}_n$, where $n$ varies in some infinite subset of $\mathbb N$ and $A_n$ is a matrix of size $d_n\times e_n$ such that both $d_n$ and $e_n$ tend to $\infty$ as $n\to\infty$.

\begin{definition}[\textbf{singular value and spectral distribution of a sequence of matrices}]\label{def-distribution}\hfill
\begin{itemize}[leftmargin=*,nolistsep]
	\item Let $\{A_n\}_n$ be a sequence of matrices with $A_n$ of size $d_n\times e_n$, and let $f:D\subset\mathbb R^k\to\mathbb C^{s\times t}$ be measurable with $0<\mu_k(D)<\infty$.
	We say that $\{A_n\}_n$ has a (asymptotic) singular value distribution described by $f$, and we write $\{A_n\}_n\sim_\sigma f$, if
	\begin{equation*}
	\lim_{n\rightarrow\infty}\frac1{d_n\wedge e_n}\sum_{i=1}^{d_n\wedge e_n}F(\sigma_i(A_n))=\frac1{\mu_k(D)}\int_D\frac{\sum_{i=1}^{s\wedge t}F(\sigma_i(f(\xx)))}{s\wedge t}{\rm d}\xx,\qquad\forall\,F\in C_c(\mathbb R).
	\end{equation*}
	\item Let $\{A_n\}_n$ be a sequence of square matrices with $A_n$ of size $d_n$, and let $f:D\subset\mathbb R^k\to\mathbb C^{s\times s}$ be measurable with $0<\mu_k(D)<\infty$.
	We say that $\{A_n\}_n$ has a (asymptotic) spectral (or eigenvalue) distribution described by $f$, and we write $\{A_n\}_n\sim_\lambda f$, if
	\begin{equation*}
	\lim_{n\rightarrow\infty}\frac1{d_n}\sum_{i=1}^{d_n}F(\lambda_i(A_n))=\frac1{\mu_k(D)}\int_D\frac{\sum_{i=1}^sF(\lambda_i(f(\xx)))}{s}{\rm d}\xx,\qquad\forall\,F\in C_c(\mathbb C).
	\end{equation*}
\end{itemize}
\end{definition}

We remark that the functions $\xx\mapsto\sum_{i=1}^{s\wedge t}F(\sigma_i(f(\xx)))$ and $\xx\mapsto\sum_{i=1}^{s}F(\lambda_i(f(\xx)))$ appearing in Definition~\ref{def-distribution} are well-defined and measurable by \cite[Lemma~2.1]{bg}. We refer the reader to \cite[Remarks~4.1--4.2]{blocking} for the informal meaning behind the singular value and spectral distribution of a sequence of matrices.

Throughout this paper, whenever we write a relation such as $\{A_n\}_n\sim_\sigma f$, without further specifications, it is understood that $\{A_n\}_n$ and $f$ are as in item~1 of Definition~\ref{def-distribution}, i.e., $\{A_n\}_n$ is a sequence of matrices and $f$ is a measurable function from a subset $D$ of some $\mathbb R^k$ with $0<\mu_k(D)<\infty$ to a matrix space $\mathbb C^{s\times t}$.
Similarly, whenever we write a relation such as $\{A_n\}_n\sim_\lambda f$, without further specifications, it is understood that $\{A_n\}_n$ and $f$ are as in item~2 of Definition~\ref{def-distribution}, i.e., $\{A_n\}_n$ is a sequence of square matrices and $f$ is a measurable function from a subset $D$ of some $\mathbb R^k$ with $0<\mu_k(D)<\infty$ to a matrix space $\mathbb C^{s\times s}$ consisting of square matrices.
Whenever we write $\{A_n\}_n\sim_{\sigma,\lambda}f$, we mean that both $\{A_n\}_n\sim_\sigma f$ and $\{A_n\}_n\sim_\lambda f$ hold.

\subsection{Special sequences of matrices}

We introduce in this section a few sequences of matrices that play an important role in the theory of GLT sequences.

\subsubsection{Zero-distributed sequences}
A zero-distributed sequence is a sequence of matrices $\{Z_n\}_n$ such that $\{Z_n\}_n\sim_\sigma0$, i.e.,
\[ \lim_{n\to\infty}\frac1{d_n\wedge e_n}\sum_{i=1}^{d_n\wedge e_n}F(\sigma_i(Z_n))=F(0),\qquad\forall\,F\in C_c(\mathbb R), \]
where $d_n\times e_n$ is the size of $Z_n$.

\subsubsection{Sequences of diagonal sampling matrices}
If $\nn\in\mathbb N^d$ and $a:[0,1]^d\to\mathbb C^{s\times t}$, the $\nn$th ($d$-level block) diagonal sampling matrix generated by $a$ is the block diagonal matrix of size $N(\nn)s\times N(\nn)t$ given by
\begin{equation*}
D_\nn(a)=\mathop{{\rm diag}}_{\ii=\bu,\ldots,\nn}a\Bigl(\frac\ii\nn\Bigr).
\end{equation*}
Any sequence of matrices of the form $\{D_\nn(a)\}_n$, with $a:[0,1]^d\to\mathbb C^{s\times t}$ and $\{\nn=\nn(n)\}_n\subseteq\mathbb N^d$ such that $\nn\to\infty$ as $n\to\infty$, is referred to as a sequence of ($d$-level block) diagonal sampling matrices generated by $a$.

\subsubsection{Toeplitz sequences}
If $f:[-\pi,\pi]^d\to\mathbb C^{s\times t}$ is a function in $L^1([-\pi,\pi]^d)$, its Fourier coefficients are denoted by $\{f_\kk\}_{\kk\in\mathbb Z^d}$ and are defined as follows:
\begin{equation}\label{fc-def}
f_\kk=\frac1{(2\pi)^d}\int_{[-\pi,\pi]^d}f(\btheta)\hspace{0.75pt}{\rm e}^{-{\rm i}\kk\cdot\btheta}{\rm d}\btheta\,\in\,\mathbb C^{s\times t},\qquad\kk\in\mathbb Z^d,
\end{equation}
where $\kk\cdot\btheta=k_1\theta_1+\ldots+k_d\theta_d$ is the scalar product and it is understood (here and throughout this paper) that the integral of a matrix-valued function is computed componentwise.
If $\nn\in\mathbb N^d$ and $f:[-\pi,\pi]^d\to\mathbb C^{s\times t}$ is a function in $L^1([-\pi,\pi]^d)$, the $\nn$th ($d$-level block) Toeplitz matrix generated by $f$ is the $N(\nn)s\times N(\nn)t$ matrix given by
\begin{equation}\label{Toep}
T_\nn(f)=[f_{\ii-\jj}]_{\ii,\jj=\bu}^\nn.
\end{equation}
Any sequence of matrices of the form $\{T_\nn(f)\}_n$, with $f:[-\pi,\pi]^d\to\mathbb C^{s\times t}$ in $L^1([-\pi,\pi]^d)$ and $\{\nn=\nn(n)\}_n\subseteq\mathbb N^d$ such that $\nn\to\infty$ as $n\to\infty$, is referred to as a ($d$-level block) Toeplitz sequence generated by $f$.

\subsection{Approximating classes of sequences}

The notion of approximating classes of sequences (a.c.s.)\ is the cornerstone of an asymptotic approximation theory for sequences of matrices that is fundamental for the theory of GLT sequences; see \cite[Chapter~5]{GLTbookI}. The formal definition of a.c.s.\ is reported in Definition~\ref{a.c.s.}. 
Throughout this paper, we denote by $\|\cdot\|$ the Euclidean norm ($2$-norm) of vectors and the associated induced (operator) norm for matrices.
Recall that, for every matrix $X$, we have $\|X\|=\sigma_{\max}(X)$, where $\sigma_{\max}(X)$ denotes the maximum singular value of $X$. 

\begin{definition}[\textbf{approximating class of sequences}]\label{a.c.s.}
Let $\{A_n\}_n$ be a sequence of matrices with $A_n$ of size $d_n\times e_n$, and let $\{\{B_{n,m}\}_n\}_m$ be a sequence of sequences of matrices with $B_{n,m}$ of size $d_n\times e_n$. We say that $\{\{B_{n,m}\}_n\}_m$ is an approximating class of sequences (a.c.s.)\ for $\{A_n\}_n$, and we write $\{B_{n,m}\}_n\xrightarrow{\rm a.c.s.}\{A_n\}_n$, if the following condition is met: for every $m$ there exists $n_m$ such that, for $n\ge n_m$,
\begin{equation*}
A_n=B_{n,m}+R_{n,m}+N_{n,m},\qquad{\rm rank}(R_{n,m})\le c(m)(d_n\wedge e_n),\qquad\|N_{n,m}\|\le\omega(m),
\end{equation*}
where $n_m,\,c(m),\,\omega(m)$ depend only on $m$ and $\lim_{m\to\infty}c(m)=\lim_{m\to\infty}\omega(m)=0$.
\end{definition}

\subsection{GLT sequences}


Definition~\ref{GLT_def} can be inferred from \cite[Section~5]{bgR}. It is an equivalent alternative to the usual (complicated) definition of GLT sequences.
Throughout this paper, we denote by $O_{s,t}$ the $s\times t$ zero matrix and by $I_s$ the $s\times s$ identity matrix. 

\begin{definition}[\textbf{generalized locally Toeplitz sequence}]\label{GLT_def}
Let $\{X_n\}_n$ be a sequence of matrices, with $X_n$ of size $N(\nn)s\times N(\nn)t$ for some fixed positive integers $s,t$ and some sequence of positive $d$-indices $\{\nn=\nn(n)\}_n$ tending to $\infty$, and let $\kappa:[0,1]^d\times[-\pi,\pi]^d\to\mathbb C^{s\times t}$ be measurable. We say that $\{X_n\}_n$ is a ($d$-level block) GLT sequence with symbol $\kappa$, and we write $\{X_n\}_n\sim_{\rm GLT}\kappa$, if there exist functions $a_{i,m}$, $f_{i,m}$, $i=1,\ldots,N_m$, such that:
\begin{itemize}[nolistsep,leftmargin=*]
	\item $a_{i,m}:[0,1]^d\to\mathbb C$ is continuous a.e.\ on $[0,1]^d$ and $f_{i,m}:[-\pi,\pi]^d\to\mathbb C^{s\times t}$ belongs to $L^1([-\pi,\pi]^d)$;
	\item $\kappa_m(\xx,\btheta)=\sum_{i=1}^{N_m}a_{i,m}(\xx)f_{i,m}(\btheta)\to\kappa(\xx,\btheta)$ a.e.\ on $[0,1]^d\times[-\pi,\pi]^d$; \vspace{2pt}
	\item $\{X_{n,m}\}_n=\bigl\{\sum_{i=1}^{N_m}D_\nn(a_{i,m}I_s)T_\nn(f_{i,m})\bigr\}_n\xrightarrow{\rm a.c.s.}\{X_n\}_n$.
\end{itemize}
\end{definition}

The properties of GLT sequences that we need in this paper are listed below. The corresponding proofs can be found in \cite{bgR,bg}.

\begin{enumerate}[nolistsep,leftmargin=37.5pt]
	\item[\textbf{GLT1.}] Let $\{X_n\}_n$ be a sequence of matrices, with $X_n$ of size $N(\nn)s\times N(\nn)t$ for some fixed positive integers $s,t$ and some sequence of positive $d$-indices $\{\nn=\nn(n)\}_n$ tending to $\infty$, and let $\kappa:[0,1]^d\times[-\pi,\pi]^d\to\mathbb C^{s\times t}$ be measurable.
	\begin{itemize}[nolistsep,leftmargin=*]
		\item If $\{X_n\}_n\sim_{\rm GLT}\kappa$, then $\{X_n\}_n\sim_\sigma\kappa$.
		\item If $\{X_n\}_n\sim_{\rm GLT}\kappa$ and the matrices $X_n$ are Hermitian, then $s=t$, $\kappa$ is Hermitian a.e.\ on $[0,1]^d\times[-\pi,\pi]^d$, and $\{X_n\}_n\sim_\lambda\kappa$.
	\end{itemize}
	\item[\textbf{GLT2.}] Let $s,t$ be positive integers and let $\{\nn=\nn(n)\}_n$ be a sequence of positive $d$-indices tending to $\infty$. Then,
	\begin{itemize}[nolistsep,leftmargin=*]
		\item $\{T_\nn(f)\}_n\sim_{\rm GLT}\kappa(\xx,\btheta)=f(\btheta)$ if $f:[-\pi,\pi]^d\to\mathbb C^{s\times t}$ belongs to $L^1([-\pi,\pi]^d)$;
		\item $\{D_\nn(a)\}_n\sim_{\rm GLT}\kappa(\xx,\btheta)=a(\xx)$ if $a:[0,1]^d\to\mathbb C^{s\times t}$ is continuous a.e.;
		\item for every sequence of matrices $\{Z_n\}_n$ with $Z_n$ of size $N(\nn)s\times N(\nn)t$, we have
		$\{Z_n\}_n\sim_{\rm GLT}\kappa(\xx,\btheta)=O_{s,t}$ if and only if $\{Z_n\}_n\sim_\sigma0$.
	\end{itemize}
	\item[\textbf{GLT3.}] Let $\{X_n\}_n$, $\{Y_n\}_n$ be sequences of matrices, with $X_n$ of size $N(\nn)s\times N(\nn)t$ and $Y_n$ of size $N(\nn)u\times N(\nn)v$ for some fixed positive integers $s,t,u,v$ and some sequence of positive $d$-indices $\{\nn=\nn(n)\}_n$ tending to $\infty$, and let $\kappa:[0,1]^d\times[-\pi,\pi]^d\to\mathbb C^{s\times t}$ and $\xi:[0,1]^d\times[-\pi,\pi]^d\to\mathbb C^{u\times v}$ be measurable. Suppose that $\{X_n\}_n\sim_{\rm GLT}\kappa$ and $\{Y_n\}_n\sim_{\rm GLT}\xi$. Then,
	\begin{itemize}[nolistsep,leftmargin=*]
		\item $\{\alpha X_n+\beta Y_n\}_n\sim_{\rm GLT}\alpha\kappa+\beta\xi$ for every $\alpha,\beta\in\mathbb C$ if $\kappa$ and $\xi$ are summable (i.e., $s=u$ and $t=v$);
		\item $\{X_nY_n\}_n\sim_{\rm GLT}\kappa\xi$ if $\kappa$ and $\xi$ are multipliable (i.e., $t=u$).
	\end{itemize}
	\item[\textbf{GLT4.}] Let $\{X_n\}_n$ be a sequence of matrices, with $X_n$ of size $N(\nn)s\times N(\nn)t$ for some fixed positive integers $s,t$ and some sequence of positive $d$-indices $\{\nn=\nn(n)\}_n$ tending to $\infty$, and let $\kappa:[0,1]^d\times[-\pi,\pi]^d\to\mathbb C^{s\times t}$ be measurable. Suppose that there exists a sequence of sequences of matrices $\{\{X_{n,m}\}_n\}_m$ with $X_{n,m}$ of size $N(\nn)s\times N(\nn)t$ such that
	\begin{itemize}[nolistsep,leftmargin=*]
		\item $\{X_{n,m}\}_n\sim_{\rm GLT}\kappa_m$ for some measurable $\kappa_m:[0,1]^d\times[-\pi,\pi]^d\to\mathbb C^{s\times t}$,
		\item $\kappa_m\to\kappa$ a.e.\ on $[0,1]^d\times[-\pi,\pi]^d$,
		\item $\{X_{n,m}\}_n\xrightarrow{\rm a.c.s.}\{X_n\}_n$.
	\end{itemize}
	Then, $\{X_n\}_n\sim_{\rm GLT}\kappa$.
\end{enumerate}

\section{General properties of tensor products}\label{sec:gptp}

We collect in this section a few general properties of tensor products that we need later on.
In particular, Section~\ref{tpm} explores the tensor (Kronecker) product of matrices, while Section~\ref{tpf} is devoted to the tensor product of functions.

\subsection{Tensor product of matrices}\label{tpm}
If $X\in\mathbb C^{m\times n}$ and $Y\in\mathbb C^{p\times q}$, the tensor (Kronecker) product of $X$ and $Y$ is the $mp\times nq$ matrix defined by
\[ X\otimes Y=[x_{ij}Y]_{i=1,\ldots,m}^{j=1,\ldots,n}=\begin{bmatrix}x_{11}Y & \cdots & x_{1n}Y\\ \vdots & & \vdots\\ x_{m1}Y & \cdots & x_{mn}Y\end{bmatrix}. \]
Here is a list of basic properties satisfied by tensor products. These properties will be used throughout this paper, either explicitly or implicitly.
Throughout this paper, if $\vv_1,\vv_2,\ldots,\vv_d$ are vectors, we denote by $(\vv_1,\vv_2,\ldots,\vv_d)$ the vector whose components are those of $\vv_1$ followed by those of $\vv_2$ followed by those of $\vv_3$ and so on until the last components, which are those of $\vv_d$.
\begin{enumerate}[leftmargin=22.5pt,nolistsep]
	\item[\textbf{P1.}] $[$Associativity$]$ $(X\otimes Y)\otimes Z=X\otimes(Y\otimes Z)$ for all matrices $X$, $Y$, $Z$. We can therefore omit parentheses in expressions like $X_1\otimes X_2\otimes\cdots\otimes X_d$.
	\item[\textbf{P2.}] $[$Bilinearity$]$ For each fixed matrix $X$, the map $Y\mapsto X\otimes Y$ is linear on $\mathbb C^{p\times q}$ for every $p,q\in\mathbb N$; for each fixed matrix $Y$, the map $X\mapsto X\otimes Y$ is linear on $\mathbb C^{m\times n}$ for every $m,n\in\mathbb N$.
	\item[\textbf{P3.}] 
	$(X\otimes Y)^T=X^T\otimes Y^T$ for all matrices $X$, $Y$.
	\item[\textbf{P4.}] $(X\otimes Y)(W\otimes Z)=(XW)\otimes(YZ)$ for all matrices $X$, $Y$, $W$, $Z$ such that $X$, $W$ are multipliable and $Y$, $Z$ are multipliable.
	\item[\textbf{P5.}] ${\rm rank}(X\otimes Y)={\rm rank}(X){\rm rank}(Y)$ and $\|X\otimes Y\|=\|X\|\,\|Y\|$ for all matrices $X$, $Y$. 
	\item[\textbf{P6.}] $[$Multi-index formula for tensor products$]$ If $\mm_1,\ldots,\mm_d,\nn_1,\ldots,\nn_d$ are positive multi-indices and $X_r\in\mathbb C^{N(\mm_r)\times N(\nn_r)}$ for $r=1,\ldots,d$, then
	\begin{equation}\label{mif}
	(X_1\otimes X_2\otimes\cdots\otimes X_d)_{\ii\jj}=(X_1)_{\ii_1\jj_1}(X_2)_{\ii_2\jj_2}\cdots(X_d)_{\ii_d\jj_d},\qquad\ii=\bu,\ldots,\mm,\qquad\jj=\bu,\ldots,\nn,
	\end{equation}
	where $\mm=(\mm_1,\ldots,\mm_d)$ and $\nn=(\nn_1,\ldots,\nn_d)$. In \eqref{mif}, it is understood that every $\ii\in\{\bu,\ldots,\mm\}$ is decomposed as $\ii=(\ii_1,\ldots,\ii_d)$ with $\ii_r\in\mathbb N^{|\mm_r|}$ for $r=1,\ldots,d$, and every $\jj\in\{\bu,\ldots,\nn\}$ is decomposed as $\jj=(\jj_1,\ldots,\jj_d)$ with $\jj_r\in\mathbb N^{|\nn_r|}$ for $r=1,\ldots,d$.
	As a particular case of \eqref{mif}, if $m_1,\ldots,m_d,n_1,\ldots,n_d$ are positive integers and $X_r\in\mathbb C^{m_r\times n_r}$ for $r=1,\ldots,d$, then 
	\[ (X_1\otimes X_2\otimes\cdots\otimes X_d)_{\ii\jj}=(X_1)_{i_1j_1}(X_2)_{i_2j_2}\cdots(X_d)_{i_dj_d},\qquad\ii=\bu,\ldots,\mm,\qquad\jj=\bu,\ldots,\nn, \]
	where $\mm=(m_1,\ldots,m_d)$ and $\nn=(n_1,\ldots,n_d)$.
\end{enumerate}
For properties {\bf P1}--{\bf P5}, see, e.g., \cite[Section~2.4.5]{GLTbookI} or \cite[Chapter~2]{Steeb}.
Property {\bf P6} follows from the combination of two definitions: the definition of tensor product and the definition of lexicographic ordering.
For example, in the case $d=2$, we can prove {\bf P6} as follows. Let $\mm_1,\mm_2,\nn_1,\nn_2$ be positive multi-indices, let $X_1\in\mathbb C^{N(\mm_1)\times N(\nn_1)}$, $X_2\in\mathbb C^{N(\mm_2)\times N(\nn_2)}$, and let $\mm=(\mm_1,\mm_2)$, $\nn=(\nn_1,\nn_2)$, as per {\bf P6} for $d=2$. Then,
\begin{align}
X_1\otimes X_2&=\left[\ (X_1)_{\ii_1\jj_1}X_2\ \right]_{\ii_1=\bu,\ldots,\mm_1}^{\jj_1=\bu,\ldots,\nn_1}\quad\mbox{\footnotesize(by definition of tensor product)}\notag\\
&=\left[\ \left[\ (X_1)_{\ii_1\jj_1}(X_2)_{\ii_2\jj_2}\ \right]_{\ii_2=\bu,\ldots,\mm_2}^{\jj_2=\bu,\ldots,\nn_2}\ \right]_{\ii_1=\bu,\ldots,\mm_1}^{\jj_1=\bu,\ldots,\nn_1}\notag\\
&=\left[\ (X_1)_{\ii_1\jj_1}(X_2)_{\ii_2\jj_2}\ \right]_{\ii=\bu,\ldots,\mm}^{\jj=\bu,\ldots,\nn}\quad\mbox{\footnotesize(by definition of lexicographic ordering),}\label{intui}
\end{align}
where in the last equality it is understood that $\ii=(\ii_1,\ii_2)$ and $\jj=(\jj_1,\jj_2)$ with $|\ii_1|=|\mm_1|$, $|\ii_2|=|\mm_2|$, $|\jj_1|=|\nn_1|$, $|\jj_2|=|\nn_2|$, 
as specified in {\bf P6}. From \eqref{intui} and the obvious equation $X_1\otimes X_2=\left[(X_1\otimes X_2)_{\ii\jj}\right]_{\ii=\bu,\ldots,\mm}^{\jj=\bu,\ldots,\nn}$, we infer that $(X_1\otimes X_2)_{\ii\jj}=(X_1)_{\ii_1\jj_1}(X_2)_{\ii_2\jj_2}$ for every $\ii=\bu,\ldots,\mm$ and $\jj=\bu,\ldots,\nn$. This is precisely \eqref{mif} for $d=2$, and the proof of {\bf P6} for $d=2$ is complete.
The proof of {\bf P6} for $d\ge3$ can be performed by induction on $d$, starting from $d=2$; the details are left to the reader.

In addition to {\bf P1}--{\bf P6}, a further property of tensor products, which is fundamental for our purposes, is highlighted in Lemma~\ref{GammaLd}.
In order to prove Lemma~\ref{GammaLd}, we need to introduce the permutation matrices $P_{n_1,n_2}$ and $\Gamma_{n_1,\ldots,n_d}(\sigma)$ (see Definitions~\ref{Gamma} and~\ref{Gamma(sigma)}) and to formulate a preliminary result (Lemma~\ref{GammaL}).
Throughout this paper, the (column) vectors of the canonical basis of $\mathbb C^n$ are denoted by $\ee_1^{(n)},\ldots,\ee_n^{(n)}$.
Moreover, a permutation $\zeta$ of the set $\{1,\ldots,n\}$ is denoted by its list of values $[\zeta(1),\ldots,\zeta(n)]$.
For example, $\zeta=[1,3,2]$ is the permutation of $\{1,2,3\}$ defined by $\zeta(1)=1$, $\zeta(2)=3$, $\zeta(3)=2$.

\begin{definition}\label{Gamma}
For all positive integers $n_1,n_2$, we define $P_{n_1,n_2}$ as the permutation matrix of size $n_1n_2$ associated with the permutation $\zeta$ of $\{1,2,\ldots,n_1n_2\}$ given by
\begin{align*}
\zeta&=[1,n_2+1,2n_2+1,\ldots,(n_1-1)n_2+1,\\
&\hphantom{=[}\;2,n_2+2,2n_2+2,\ldots,(n_1-1)n_2+2,\\
&\hphantom{=[}\;\ldots\,\ldots\,\ldots,\\
&\hphantom{=[}\;n_2,2n_2,3n_2\ldots,n_1n_2],
\end{align*}
i.e.,
\[ \zeta(i)=((i-1)\,\textup{mod}\,n_1)n_2+\left\lfloor\frac{i-1}{n_1}\right\rfloor+1,\qquad i=1,\ldots,n_1n_2. \]
In other words, $P_{n_1,n_2}$ is the $n_1n_2\times n_1n_2$ matrix whose rows are $(\ee_{\zeta(1)}^{(n_1n_2)})^T,\ldots,(\ee_{\zeta(n_1n_2)}^{(n_1n_2)})^T$ (in this order).
In formulas,
\begin{equation}\label{Gamman1n2}
P_{n_1,n_2}=\begin{bmatrix}
I_{n_1} \otimes (\ee_1^{(n_2)})^T \\[3pt]
I_{n_1} \otimes (\ee_2^{(n_2)})^T \\[3pt]
\vdots \\[3pt]
I_{n_1} \otimes (\ee_{n_2}^{(n_2)})^T
\end{bmatrix}=\sum_{i=1}^{n_2}\ee_i^{(n_2)}\otimes I_{n_1}\otimes(\ee_i^{(n_2)})^T.
\end{equation}
\end{definition}

Lemma~\ref{GammaL} is known in the literature; see, e.g., \cite[Theorem~2.8]{Steeb}.
For the reader's convenience, however, we include the details of the proof, which takes up only a few lines.

\begin{lemma}\label{GammaL}
For all matrices $X_1\in\mathbb C^{m_1\times n_1}$ and $X_2\in\mathbb C^{m_2\times n_2}$, we have
\begin{equation}\label{tensorP}
X_2\otimes X_1=P_{m_1,m_2}(X_1\otimes X_2)P_{n_1,n_2}^T.
\end{equation}
\end{lemma}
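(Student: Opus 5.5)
The plan is to verify \eqref{tensorP} entrywise, describing both sides with the two-index (lexicographic) notation of {\bf P6}. First I will rephrase Definition~\ref{Gamma} as an action on vectors. Index the components of a vector $\xx\in\mathbb C^{n_1n_2}$ by pairs $(j_1,j_2)$ with $j_1=1,\ldots,n_1$, $j_2=1,\ldots,n_2$, in lexicographic order, so position $(j_1-1)n_2+j_2$ corresponds to $(j_1,j_2)$. Take the row of $P_{n_1,n_2}$ indexed by the pair $(k_2,k_1)$ in $\{1,\ldots,n_2\}\times\{1,\ldots,n_1\}$, i.e.\ row number $i=(k_2-1)n_1+k_1$. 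The formula for $\zeta$ gives $\zeta(i)=(k_1-1)n_2+k_2$, which is the position of $(k_1,k_2)$. Hence
\[ (P_{n_1,n_2})_{(k_2,k_1),(j_1,j_2)}=\delta_{k_1j_1}\delta_{k_2j_2}, \]
so $(P_{n_1,n_2}\xx)_{(k_2,k_1)}=x_{(k_1,k_2)}$: the permutation simply swaps the two indices. The same fact can be read off the block form \eqref{Gamman1n2}, since the $k_2$-th block row $I_{n_1}\otimes(\ee_{k_2}^{(n_2)})^T$ extracts the components $x_{(k_1,k_2)}$, $k_1=1,\ldots,n_1$.

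Next, I will compute the $((i_2,i_1),(j_2,j_1))$ entry of the right-hand side of \eqref{tensorP}. Here $i_r=1,\ldots,m_r$ and $j_r=1,\ldots,n_r$, and the pairs are ordered as rows and columns of $X_2\otimes X_1$. Expanding the product of the three matrices, the characterization above collapses both sums, leaving $(X_1\otimes X_2)_{(i_1,i_2),(j_1,j_2)}$. For the right factor one uses $(P^T_{n_1,n_2})_{(j_1',j_2'),(j_2,j_1)}=(P_{n_1,n_2})_{(j_2,j_1),(j_1',j_2')}$. By the particular case of {\bf P6} with $d=2$, this entry equals $(X_1)_{i_1j_1}(X_2)_{i_2j_2}$. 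Applying {\bf P6} again, the left-hand side entry $(X_2\otimes X_1)_{(i_2,i_1),(j_2,j_1)}$ equals $(X_2)_{i_2j_2}(X_1)_{i_1j_1}$. The two coincide, which proves the lemma.

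The only delicate step is the first one: checking that the explicit formula for $\zeta$ really matches the index swap under the lexicographic conventions, including the roles of $n_1$ versus $n_2$ and the reversed pair order. That is a short computation with $\lfloor (i-1)/n_1\rfloor=k_2-1$ and $(i-1)\bmod n_1=k_1-1$. An alternative route is to check \eqref{tensorP} on rank-one matrices $X_1=\uu\vv^T$, $X_2=\xx\mathbf y^T$ and conclude by bilinearity ({\bf P2}), using $P(\uu\otimes\xx)=\xx\otimes\uu$. The entrywise argument is more direct.
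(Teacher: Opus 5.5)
Your proof is correct. It is not the argument the paper writes out, though: the paper only says in passing that a ``direct componentwise computation'' works, and then gives a different, index-free proof.

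In the paper's proof, one starts from the sum representation $P_{n_1,n_2}=\sum_{i=1}^{n_2}\ee_i^{(n_2)}\otimes I_{n_1}\otimes(\ee_i^{(n_2)})^T$ in \eqref{Gamman1n2} and transposes the right factor using {\bf P3}. One then writes $X_1\otimes X_2=1\otimes X_1\otimes X_2$ and applies the mixed-product rule {\bf P4} to each three-factor product. The scalar $(\ee_i^{(m_2)})^TX_2\ee_j^{(n_2)}=(X_2)_{ij}$ falls out of the last slot, and the double sum reassembles into $X_2\otimes X_1$. That route never touches the formula for $\zeta$ or lexicographic positions; it needs only the block description of $P_{n_1,n_2}$ and standard algebraic identities.

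Your route requires the bookkeeping check $\zeta((k_2-1)n_1+k_1)=(k_1-1)n_2+k_2$, which you carry out correctly. In return, it identifies $P_{n_1,n_2}$ as the index swap $(P_{n_1,n_2}\xx)_{(k_2,k_1)}=x_{(k_1,k_2)}$. Via {\bf P6}, the lemma then reduces to the commutativity of the scalar product $(X_1)_{i_1j_1}(X_2)_{i_2j_2}$.

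Your viewpoint also generalizes at once to $d$ factors. The permutation defined by $(\Gamma\xx)_{(k_{\sigma(1)},\ldots,k_{\sigma(d)})}=x_{(k_1,\ldots,k_d)}$ satisfies the identity of Lemma~\ref{GammaLd} directly by {\bf P6}, with no induction. By Lemma~\ref{uGamma}, it coincides with the recursively defined $\Gamma_{n_1,\ldots,n_d}(\sigma)$. The paper's algebraic proof, by contrast, is the one that fits its recursive construction of $\Gamma_{n_1,\ldots,n_d}(\sigma)$ through {\bf P3}--{\bf P4}.
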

\begin{proof}
The result can be proved by a direct componentwise computation, i.e., by showing that the $(i,j)$ entry of the second matrix in \eqref{tensorP} is equal to the $(i,j)$ entry of the first matrix in \eqref{tensorP} for all indices $i=1,\ldots,m_1m_2$ and $j=1,\ldots,n_1n_2$.
A more elegant proof, based on formula \eqref{Gamman1n2} and properties {\bf P3}--{\bf P4}, is the following: for every $X_1\in\mathbb C^{m_1\times n_1}$ and every $X_2\in\mathbb C^{m_2\times n_2}$, we have
\begin{align*}
P_{m_1,m_2}(X_1\otimes X_2)P_{n_1,n_2}^T&=\Biggl(\sum_{i=1}^{m_2}\ee_i^{(m_2)}\otimes I_{m_1}\otimes(\ee_i^{(m_2)})^T\Biggr)(X_1\otimes X_2)\Biggl(\sum_{j=1}^{n_2}\ee_j^{(n_2)}\otimes I_{n_1}\otimes(\ee_j^{(n_2)})^T\Biggr)^T\\
&=\Biggl(\sum_{i=1}^{m_2}\ee_i^{(m_2)}\otimes I_{m_1}\otimes(\ee_i^{(m_2)})^T\Biggr)(X_1\otimes X_2)\Biggl(\sum_{j=1}^{n_2}(\ee_j^{(n_2)})^T\otimes I_{n_1}\otimes\ee_j^{(n_2)}\Biggr)\\
&=\sum_{i=1}^{m_2}\sum_{j=1}^{n_2}(\ee_i^{(m_2)}\otimes I_{m_1}\otimes(\ee_i^{(m_2)})^T)(1\otimes X_1\otimes X_2)((\ee_j^{(n_2)})^T\otimes I_{n_1}\otimes\ee_j^{(n_2)})\\
&=\sum_{i=1}^{m_2}\sum_{j=1}^{n_2}\ee_i^{(m_2)}(\ee_j^{(n_2)})^T\otimes X_1\otimes\underbrace{(\ee_i^{(m_2)})^TX_2\ee_j^{(n_2)}}_{(X_2)_{ij}}\\
&=\underbrace{\sum_{i=1}^{m_2}\sum_{j=1}^{n_2}(X_2)_{ij}\ee_i^{(m_2)}(\ee_j^{(n_2)})^T}_{X_2}{}\otimes X_1=X_2\otimes X_1. \qedhere
\end{align*}
\end{proof}

\begin{definition}\label{Gamma(sigma)}
For all positive integers $d,n_1,\ldots,n_d$ and all permutations $\sigma$ of $\{1,\ldots,d\}$, we recursively define $\Gamma_{n_1,\ldots,n_d}(\sigma)$ as the following permutation matrix of size $n_1\cdots n_d$.
\begin{itemize}[nolistsep,leftmargin=*]
	\item If $d=1$, then $\Gamma_{n_1}([1])=I_{n_1}$.
	\item If $d=2$, then $\Gamma_{n_1,n_2}([1,2])=I_{n_1n_2}$ and $\Gamma_{n_1,n_2}([2,1])=P_{n_1,n_2}$. 
	\item If $d\ge3$, then
	\begin{equation}\label{Gamma-rec}
	\Gamma_{n_1,\ldots,n_d}(\sigma)=(I_{n_{\sigma(1)}\cdots n_{\sigma(i-1)}}\otimes P_{n_{\sigma(i+1)}\cdots n_{\sigma(d)},n_d})(\Gamma_{n_1,\ldots,n_{d-1}}(\tau)\otimes I_{n_d}),
	\end{equation}
	where $i\in\{1,\ldots,d\}$ is the index such that $\sigma(i)=d$ and $\tau$ is the permutation of $\{1,\ldots,d-1\}$ defined by 
	\[ \tau=[\sigma(1),\ldots,\sigma(i-1),\sigma(i+1),\ldots,\sigma(d)]. \]
	In \eqref{Gamma-rec}, it is understood (as usual) that an empty product is equal to~1, so that
	\[ I_{n_{\sigma(1)}\cdots n_{\sigma(i-1)}}\otimes P_{n_{\sigma(i+1)}\cdots n_{\sigma(d)},n_d}=\left\{\begin{aligned}
	&I_1\otimes P_{n_{\sigma(2)}\cdots n_{\sigma(d)},n_d}=P_{n_1\cdots n_{d-1},n_d}, &\mbox{if $i=1$},\\
	&I_{n_{\sigma(1)}\cdots n_{\sigma(d-1)}}\otimes P_{1,n_d}=I_{n_{\sigma(1)}\cdots n_{\sigma(d-1)}}\otimes I_{n_d}=I_{n_1\cdots n_d}, &\mbox{if $i=d$}.
	\end{aligned}\right. \]
\end{itemize}
\end{definition}

It is left as an exercise for the reader to prove by induction on $d$ that, for all positive integers $d,n_1,\ldots,n_d$ and all permutations $\sigma$ of $\{1,\ldots,d\}$, $\Gamma_{n_1,\ldots,n_d}(\sigma)$ is a permutation matrix of size $n_1\cdots n_d$, as claimed in Definition~\ref{Gamma(sigma)}.

\begin{lemma}\label{GammaLd}
For all positive integers $d,m_1,\ldots,m_d,n_1,\ldots,n_d$, all permutations $\sigma$ of $\{1,\ldots,d\}$, and all matrices $X_1\in\mathbb C^{m_1\times n_1}$, $X_2\in\mathbb C^{m_2\times n_2}$, $\ldots$, $X_d\in\mathbb C^{m_d\times n_d}$, we have
\[ X_{\sigma(1)}\otimes\cdots\otimes X_{\sigma(d)}=\Gamma_{m_1,\ldots,m_d}(\sigma)(X_1\otimes\cdots\otimes X_d)(\Gamma_{n_1,\ldots,n_d}(\sigma))^T. \]
\end{lemma}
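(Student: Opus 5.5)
We argue by induction on $d$, following the recursive structure of Definition~\ref{Gamma(sigma)}. For $d=1$ the statement is trivial, since $\Gamma_{m_1}([1])=I_{m_1}$ and $\Gamma_{n_1}([1])=I_{n_1}$. For $d=2$, the case $\sigma=[1,2]$ is trivial, and the case $\sigma=[2,1]$ is exactly Lemma~\ref{GammaL}, because $\Gamma_{m_1,m_2}([2,1])=P_{m_1,m_2}$ and $\Gamma_{n_1,n_2}([2,1])=P_{n_1,n_2}$.

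For the inductive step, let $d\ge3$, assume the statement for $d-1$, and let $i$ and $\tau$ be as in Definition~\ref{Gamma(sigma)}. We apply the induction hypothesis to $X_1,\ldots,X_{d-1}$ and $\tau$:
\[ X_{\tau(1)}\otimes\cdots\otimes X_{\tau(d-1)}=\Gamma_{m_1,\ldots,m_{d-1}}(\tau)(X_1\otimes\cdots\otimes X_{d-1})(\Gamma_{n_1,\ldots,n_{d-1}}(\tau))^T. \]
We then tensor both sides on the right with $X_d=I_{m_d}X_dI_{n_d}$. Using {\bf P4} and {\bf P3}, this gives
\[ X_{\tau(1)}\otimes\cdots\otimes X_{\tau(d-1)}\otimes X_d=(\Gamma_{m_1,\ldots,m_{d-1}}(\tau)\otimes I_{m_d})(X_1\otimes\cdots\otimes X_d)(\Gamma_{n_1,\ldots,n_{d-1}}(\tau)\otimes I_{n_d})^T. \]

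Next we move $X_d$ from the last position to position $i$. Set $A=X_{\sigma(1)}\otimes\cdots\otimes X_{\sigma(i-1)}$, of size $M_A\times N_A$ with $M_A=m_{\sigma(1)}\cdots m_{\sigma(i-1)}$, and $B=X_{\sigma(i+1)}\otimes\cdots\otimes X_{\sigma(d)}$, of size $M_B\times N_B$ with $M_B=m_{\sigma(i+1)}\cdots m_{\sigma(d)}$. By associativity ({\bf P1}), the left-hand side above equals $A\otimes B\otimes X_d$. Lemma~\ref{GammaL} applied to the pair $B,X_d$ gives
\[ X_d\otimes B=P_{M_B,m_d}(B\otimes X_d)P_{N_B,n_d}^T. \]
Tensoring on the left with $A=I_{M_A}AI_{N_A}$ and applying {\bf P4} and {\bf P3} again yields
\[ X_{\sigma(1)}\otimes\cdots\otimes X_{\sigma(d)}=A\otimes X_d\otimes B=(I_{M_A}\otimes P_{M_B,m_d})(A\otimes B\otimes X_d)(I_{N_A}\otimes P_{N_B,n_d})^T. \]
Substituting the previous display for $A\otimes B\otimes X_d$ and using $(YZ)^T=Z^TY^T$, the left and right factors combine into exactly the recursive formula \eqref{Gamma-rec}, written for the $m$'s and for the $n$'s respectively. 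This proves the claim for $d$.

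The main point requiring care is the bookkeeping in this last step. We must check that the dimensions appearing in \eqref{Gamma-rec} match those produced by the argument: $I_{m_{\sigma(1)}\cdots m_{\sigma(i-1)}}$ with the row sizes of $A$, and $P_{m_{\sigma(i+1)}\cdots m_{\sigma(d)},m_d}$ with the row sizes of $B$ and $X_d$, and likewise for the column sizes. We must also treat the degenerate cases $i=1$ and $i=d$, where $A$ or $B$ is an empty product. In those cases $A$ or $B$ is interpreted as the $1\times1$ matrix $1$, and $P_{1,n}=P_{n,1}=I_n$ because the associated permutation is the identity. With these conventions the argument goes through verbatim.
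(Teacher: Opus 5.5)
Your proof is correct and matches the paper's: induction on $d$ with base cases $d=1,2$ (the latter via Lemma~\ref{GammaL}), and an inductive step that applies Lemma~\ref{GammaL} to the pair $X_{\sigma(i+1)}\otimes\cdots\otimes X_{\sigma(d)}$, $X_d$, uses the induction hypothesis for $\tau$ together with \textbf{P3}--\textbf{P4}, and recovers the recursion \eqref{Gamma-rec}. The only difference is the order of presentation: the paper chains the equalities starting from $X_{\sigma(1)}\otimes\cdots\otimes X_{\sigma(d)}$, whereas you derive the two pieces separately and then substitute.
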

\begin{proof}
We prove the lemma by induction on $d$. For $d=1$, the result is obvious. 
For $d=2$, the result is clear when $\sigma$ is the identity $[1,2]$ and follows from Lemma~\ref{GammaL} when $\sigma=[2,1]$.
We fix $d\ge3$, we assume that the result is true for $d-1$, and we prove that it is true also for $d$.
Let $X_1\in\mathbb C^{m_1\times n_1}$, $X_2\in\mathbb C^{m_2\times n_2}$, $\ldots$, $X_d\in\mathbb C^{m_d\times n_d}$ and let $\sigma$ be a permutation of $\{1,\ldots,d\}$.
Denote by $i$ the index such that $\sigma(i)=d$, and let $\tau$ be the permutation of $\{1,\ldots,d-1\}$ defined by $\tau=[\sigma(1),\ldots,\sigma(i-1),\sigma(i+1),\ldots,\sigma(d)]$.
Then, we have
\begin{align*}
&X_{\sigma(1)}\otimes\cdots\otimes X_{\sigma(d)}=X_{\sigma(1)}\otimes\cdots\otimes X_{\sigma(i-1)}\otimes X_d\otimes X_{\sigma(i+1)}\otimes\cdots\otimes X_{\sigma(d)}\\
&=X_{\sigma(1)}\otimes\cdots\otimes X_{\sigma(i-1)}\otimes\left[P_{m_{\sigma(i+1)}\cdots m_{\sigma(d)},m_d}(X_{\sigma(i+1)}\otimes\cdots\otimes X_{\sigma(d)}\otimes X_d)P_{n_{\sigma(i+1)}\cdots n_{\sigma(d)},n_d}^T\right]\quad\mbox{\footnotesize(by Lemma~\ref{GammaL})}\\
&=(I_{m_{\sigma(1)}\cdots m_{\sigma(i-1)}}\otimes P_{m_{\sigma(i+1)}\cdots m_{\sigma(d)},m_d})(X_{\sigma(1)}\otimes\cdots\otimes X_{\sigma(i-1)}\otimes X_{\sigma(i+1)}\otimes\cdots\otimes X_{\sigma(d)}\otimes X_d)\,\cdot\\
&\qquad\cdot(I_{n_{\sigma(1)}\cdots n_{\sigma(i-1)}}\otimes P_{n_{\sigma(i+1)}\cdots n_{\sigma(d)},n_d}^T)\quad\mbox{\footnotesize(by {\bf P4})}\\
&=(I_{m_{\sigma(1)}\cdots m_{\sigma(i-1)}}\otimes P_{m_{\sigma(i+1)}\cdots m_{\sigma(d)},m_d})(X_{\tau(1)}\otimes\cdots\otimes X_{\tau(d-1)}\otimes X_d)(I_{n_{\sigma(1)}\cdots n_{\sigma(i-1)}}\otimes P_{n_{\sigma(i+1)}\cdots n_{\sigma(d)},n_d})^T\quad\mbox{\footnotesize(by {\bf P3})}\\
&=\left(I_{m_{\sigma(1)}\cdots m_{\sigma(i-1)}}\otimes P_{m_{\sigma(i+1)}\cdots m_{\sigma(d)},m_d}\right)\Bigl\{\left[\Gamma_{m_1,\ldots,m_{d-1}}(\tau)(X_1\otimes\cdots\otimes X_{d-1})(\Gamma_{n_1,\ldots,n_{d-1}}(\tau))^T\right]\otimes X_d\Bigr\}\,\cdot\\
&\qquad\cdot\left(I_{n_{\sigma(1)}\cdots n_{\sigma(i-1)}}\otimes P_{n_{\sigma(i+1)}\cdots n_{\sigma(d)},n_d}\right)^T\quad\mbox{\footnotesize(by induction hypothesis)}\\
&=\left(I_{m_{\sigma(1)}\cdots m_{\sigma(i-1)}}\otimes P_{m_{\sigma(i+1)}\cdots m_{\sigma(d)},m_d}\right)\left(\Gamma_{m_1,\ldots,m_{d-1}}(\tau)\otimes I_{m_d}\right)(X_1\otimes\cdots\otimes X_{d-1}\otimes X_d)\left(\Gamma_{n_1,\ldots,n_{d-1}}(\tau)\otimes I_{n_d}\right)^T\cdot\\
&\qquad\cdot\left(I_{n_{\sigma(1)}\cdots n_{\sigma(i-1)}}\otimes P_{n_{\sigma(i+1)}\cdots n_{\sigma(d)},n_d}\right)^T\quad\mbox{\footnotesize(by {\bf P3}--{\bf P4})}\\
&=\Gamma_{m_1,\ldots,m_d}(\sigma)(X_1\otimes\cdots\otimes X_d)(\Gamma_{n_1,\ldots,n_d}(\sigma))^T,
\end{align*}
where the last equality follows from Definition~\ref{GammaLd}.
\end{proof}


Lemma~\ref{uGamma} proves the uniqueness of the permutation matrix $\Gamma_{n_1,\ldots,n_d}(\sigma)$ appearing in Lemma~\ref{GammaLd}. More precisely, it shows that $\Gamma_{n_1,\ldots,n_d}(\sigma)$ is the unique permutation matrix (depending only on $n_1,\ldots,n_d,\sigma$) such that 
\begin{equation}\label{Ld-eq}
X_{\sigma(1)}\otimes\cdots\otimes X_{\sigma(d)}=\Gamma_{n_1,\ldots,n_d}(\sigma)(X_1\otimes\cdots\otimes X_d)(\Gamma_{n_1,\ldots,n_d}(\sigma))^T
\end{equation}
for all square matrices $X_1\in\mathbb C^{n_1\times n_1}$, $X_2\in\mathbb C^{n_2\times n_2}$, $\ldots$, $X_d\in\mathbb C^{n_d\times n_d}$.
Throughout this paper, the matrices of the canonical basis of $\mathbb C^{n\times n}$ are denoted by $E_{uv}^{(n)}$, $1\le u,v\le n$. In other words, for every $u,v=1,\ldots,n$, $E_{uv}^{(n)}$ is the matrix in $\mathbb C^{n\times n}$ having $1$ in position $(u,v)$ and $0$ elsewhere.

\begin{lemma}\label{uGamma}
Let $d,n_1,\ldots,n_d$ be positive integers, let $\sigma$ be a permutation of $\{1,\ldots,d\}$, and let $\Gamma$ be a permutation matrix of size $n_1\cdots n_d$
such that
\begin{equation}\label{Ld-eq'}
X_{\sigma(1)}\otimes\cdots\otimes X_{\sigma(d)}=\Gamma(X_1\otimes\cdots\otimes X_d)\Gamma^T
\end{equation}
for all square matrices $X_1\in\mathbb C^{n_1\times n_1}$, $X_2\in\mathbb C^{n_2\times n_2}$, $\ldots$, $X_d\in\mathbb C^{n_d\times n_d}$.
Then, $\Gamma=\Gamma_{n_1,\ldots,n_d}(\sigma)$.
\end{lemma}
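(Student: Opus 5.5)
Set $\Gamma'=\Gamma_{n_1,\ldots,n_d}(\sigma)$. By Lemma~\ref{GammaLd} (applied with $m_r=n_r$), $\Gamma'$ also satisfies \eqref{Ld-eq'}. Comparing the two identities shows that $Q=\Gamma'^T\Gamma$ is a permutation matrix of size $N=n_1\cdots n_d$ with
\[ Q(X_1\otimes\cdots\otimes X_d)Q^T=X_1\otimes\cdots\otimes X_d \]
for all square $X_r\in\mathbb C^{n_r\times n_r}$. Indeed, $\Gamma'^T(X_{\sigma(1)}\otimes\cdots\otimes X_{\sigma(d)})\Gamma'=X_1\otimes\cdots\otimes X_d$. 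Substituting $\Gamma(X_1\otimes\cdots\otimes X_d)\Gamma^T$ for the left-hand middle factor gives the displayed identity, and this uses only that permutation matrices are orthogonal. It therefore suffices to show that a permutation matrix commuting with every $X_1\otimes\cdots\otimes X_d$ is the identity, since then $\Gamma=\Gamma'Q=\Gamma'$.

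To prove this, use the canonical basis matrices. Taking $X_r=E^{(n_r)}_{u_rv_r}$ and applying property {\bf P6} shows that $E^{(n_1)}_{u_1v_1}\otimes\cdots\otimes E^{(n_d)}_{u_dv_d}$ is the matrix unit $E^{(N)}_{\uu\vv}$ in multi-index notation, with $\uu=(u_1,\ldots,u_d)$, $\vv=(v_1,\ldots,v_d)$ ranging over $\{\bu,\ldots,(n_1,\ldots,n_d)\}$. Hence all $N^2$ matrix units of $\mathbb C^{N\times N}$ are of tensor form, and by linearity $QXQ^T=X$ for every $X\in\mathbb C^{N\times N}$. Let $\pi$ be the permutation with $Q\ee_j=\ee_{\pi(j)}$. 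Then $QE_{jj}Q^T=E_{\pi(j)\pi(j)}$, and this must equal $E_{jj}$, so $\pi(j)=j$ for all $j$, i.e.\ $Q=I_N$. In fact only the diagonal choices $X_r=E^{(n_r)}_{u_ru_r}$ are needed, because their tensor products already give all diagonal units $E^{(N)}_{\uu\uu}$.

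The argument involves no real obstacle. The only point requiring care is the identification of $E^{(n_1)}_{u_1u_1}\otimes\cdots\otimes E^{(n_d)}_{u_du_d}$ with the diagonal unit at the lexicographic position of $\uu$, and this is exactly what {\bf P6} provides.
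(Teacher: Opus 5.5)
Your proof is correct and follows the paper's route. Like the paper, you reduce to a permutation matrix $Q=\Gamma_{n_1,\ldots,n_d}(\sigma)^T\Gamma$ (the transpose of the paper's $P$) that fixes every $X_1\otimes\cdots\otimes X_d$ under conjugation, and you use \textbf{P6} to identify tensor products of matrix units with the matrix units of $\mathbb C^{N\times N}$. The only difference is the last step: the paper extends commutation to all of $\mathbb C^{N\times N}$ and shows the matrix must be a multiple of $I_N$, whereas you use directly that conjugation by $Q$ sends $E_{jj}$ to $E_{\pi(j)\pi(j)}$, which needs only the diagonal units and is slightly shorter.
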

\begin{proof}
The proof consists of three steps. In what follows, $d,n_1,\ldots,n_d,\sigma,\Gamma$ are as in the statement of the lemma and $\nn=(n_1,\ldots,n_d)$.

\medskip

\noindent
{\em Step 1.} Lemma~\ref{GammaLd} (applied with $m_1=n_1,\ldots,m_d=n_d$) yields \eqref{Ld-eq}. Equations~\eqref{Ld-eq}--\eqref{Ld-eq'} imply
\[ \Gamma_{n_1,\ldots,n_d}(\sigma)(X_1\otimes\cdots\otimes X_d)(\Gamma_{n_1,\ldots,n_d}(\sigma))^T=\Gamma(X_1\otimes\cdots\otimes X_d)\Gamma^T \]
for all square matrices $X_1\in\mathbb C^{n_1\times n_1}$, $\ldots$, $X_d\in\mathbb C^{n_d\times n_d}$.
Multiplying both sides by $\Gamma^T$ on the left and $\Gamma_{n_1,\ldots,n_d}(\sigma)$ on the right, we get the following equation, which holds for every $X_1\in\mathbb C^{n_1\times n_1}$, $\ldots$, $X_d\in\mathbb C^{n_d\times n_d}$:
\begin{equation}\label{p1-eq}
P(X_1\otimes\cdots\otimes X_d)=(X_1\otimes\cdots\otimes X_d)P,
\end{equation}
where $P=\Gamma^T\Gamma_{n_1,\ldots,n_d}(\sigma)$ is a permutation matrix as a product of two permutation matrices. The thesis of the lemma consists in proving that $P$ is the identity matrix.

\medskip

\noindent
{\em Step 2.} We show that
\begin{equation}\label{p2-eq}
PX=XP,
\end{equation}
for every $X\in\mathbb C^{N(\nn)\times N(\nn)}$. In what follows, for every $\uu,\vv=\bu,\ldots,\nn$, we denote by $E_{\uu\vv}^{(\nn)}$ the matrix in $\mathbb C^{N(\nn)\times N(\nn)}$ having $1$ in position $(\uu,\vv)$ and $0$ elsewhere. Note that the sets $\{E_{uv}^{(N(\nn))}:u,v=1,\ldots,N(\nn)\}$ and $\{E_{\uu\vv}^{(\nn)}:\uu,\vv=\bu,\ldots,\nn\}$ coincide, though the indexing of the elements is different.
\begin{itemize}[nolistsep,leftmargin=18pt]
	\item[(a)] Equation~\eqref{p2-eq} holds if $X$ is a matrix of the canonical basis $\{E_{\uu\vv}^{(\nn)}:\uu,\vv=\bu,\ldots,\nn\}$. This follows from \eqref{p1-eq}, because every matrix $E_{\uu\vv}^{(\nn)}$ can be written as a tensor product $X_1\otimes\cdots\otimes X_d$ for suitable $X_1\in\mathbb C^{n_1\times n_1}$, $\ldots$, $X_d\in\mathbb C^{n_d\times n_d}$. Indeed, we have
	\begin{equation}\label{ee}
	E_{\uu\vv}^{(\nn)}=E_{u_1v_1}^{(n_1)}\otimes\cdots\otimes E_{u_dv_d}^{(n_d)},\qquad\uu,\vv=\bu,\ldots,\nn.
	\end{equation}
	To prove \eqref{ee}, fix $\bu\le\uu,\vv\le\nn$. By {\bf P6}, for every $\ii,\jj=\bu,\ldots,\nn$, we have
	\[ (E_{u_1v_1}^{(n_1)}\otimes\cdots\otimes E_{u_dv_d}^{(n_d)})_{\ii\jj}=(E_{u_1v_1}^{(n_1)})_{i_1j_1}\cdots(E_{u_dv_d}^{(n_d)})_{i_dj_d}=\delta_{i_1u_1}\delta_{j_1v_1}\cdots\delta_{i_du_d}\delta_{j_dv_d}=\delta_{\ii\uu}\delta_{\jj\vv}=(E_{\uu\vv}^{(\nn)})_{\ii\jj}, \]
	hence $E_{u_1v_1}^{(n_1)}\otimes\cdots\otimes E_{u_dv_d}^{(n_d)}=E_{\uu\vv}^{(\nn)}$.
	\item[(b)] Since the matrices $E_{uv}^{(N(\nn))}$, $u,v=1,\ldots,N(\nn)$, are a basis of $\mathbb C^{N(\nn)\times N(\nn)}$, \eqref{p2-eq} holds for every $X\in\mathbb C^{N(\nn)\times N(\nn)}$. Indeed, every $X\in\mathbb C^{N(\nn)\times N(\nn)}$ can be written as a linear combination $X=\sum_{u,v=1}^{N(\nn)}x_{uv}E_{uv}^{(N(\nn))}$, hence
	\[ PX=P\sum_{u,v=1}^{N(\nn)}x_{uv}E_{uv}^{(N(\nn))}=\sum_{u,v=1}^{N(\nn)}x_{uv}PE_{uv}^{(N(\nn))}=\sum_{u,v=1}^{N(\nn)}x_{uv}E_{uv}^{(N(\nn))}P=\Biggl[\,\sum_{u,v=1}^{N(\nn)}x_{uv}E_{uv}^{(N(\nn))}\Biggr]P=XP, \]
	where the third equality is due to (a).
\end{itemize}

\medskip

\noindent
{\em Step 3.} Let $N=N(\nn)$.\,\footnote{\,The reason why we set $N=N(\nn)$ is to highlight that the argument of Step~3 actually holds for any positive integer $N$.} The last step of the proof consists in observing that the only permutation matrix $P\in\mathbb C^{N\times N}$ that satisfies \eqref{p2-eq} for every $X\in\mathbb C^{N\times N}$, i.e., the only matrix $P\in\mathbb C^{N\times N}$ that commutes with every other matrix $X\in\mathbb C^{N\times N}$, is the identity matrix $I_N$. This can be seen by choosing $X=E_{uv}^{(N)}$ in \eqref{p2-eq} for every $u,v=1,\ldots,N$. With this choice, we obtain $PE_{uv}^{(N)}=E_{uv}^{(N)}P$, i.e.,
\[ \underbrace{\left[\begin{array}{c|c|c}
& p_{1u} & \\
O \ & \vdots & \ O \\
& p_{Nu} &
\end{array}\right]}_{\substack{\textup{the $v$th column is the}\\\textup{only nonzero column}}}=\underbrace{\left[\begin{array}{ccc}
& O & \vphantom{\Big|}\\
\hline
p_{v1} & \cdots & p_{vN} \vphantom{\Big|}\\
\hline
& O & \vphantom{\Big|}
\end{array}\right]}_{\substack{\textup{the $u$th row is the}\\\textup{only nonzero row}}} \]
for every $u,v=1,\ldots,N$. This is equivalent to saying that
\[ p_{uu}=p_{vv},\qquad p_{1u}=\ldots=p_{u-1,u}=p_{u+1,u}=\ldots=p_{Nu}=0,\qquad p_{v1}=\ldots=p_{v,v-1}=p_{v,v+1}=\ldots=p_{vN}=0, \]
for every $u,v=1,\ldots,N$. Thus, we must have $P=\alpha I_N$ for some constant $\alpha$. Since $P$ is a permutation matrix, the constant $\alpha$ must be equal to~$1$.
\end{proof}

\subsection{Tensor product of functions}\label{tpf}

If $d,q_1,\ldots,q_d,s_1,\ldots,s_d,t_1,\ldots,t_d$ are positive integers and $g_i:E_i\subseteq\mathbb R^{q_i}\to\mathbb C^{s_i\times t_i}$ for every $i=1,\ldots,d$, the tensor-product function is defined as follows:
\begin{equation}\label{tpfun}
\begin{aligned}
&g_1\otimes\cdots\otimes g_d:E_1\times\cdots\times E_d\subseteq\mathbb R^{q_1+\ldots+q_d}\to\mathbb C^{(s_1\cdots s_d)\times(t_1\cdots t_d)},\\
&(g_1\otimes\cdots\otimes g_d)(\xx)=g_1(\xx_1)\otimes\cdots\otimes g_d(\xx_d),
\end{aligned}
\end{equation}
where $\xx=(\xx_1,\ldots,\xx_d)$ is decomposed in such a way that $\xx_i\in E_i$ for every $i=1,\ldots,d$.
Two basic results on the tensor product of functions that we need in this paper are collected in Lemma~\ref{tp-int} and Corollary~\ref{tpfc}.

\begin{lemma}\label{tp-int}
Let $d,q_1,\ldots,q_d,s_1,\ldots,s_d,t_1,\ldots,t_d$ be positive integers and let $g_i:E_i\subseteq\mathbb R^{q_i}\to\mathbb C^{s_i\times t_i}$ be a function in $L^1(E_i)$ for every $i=1,\ldots,d$. Then, the tensor-product function \eqref{tpfun} is in $L^1(E_1\times\cdots\times E_d)$ and we have
\begin{equation}\label{conto-g}
\int_{E_1\times\cdots\times E_d}(g_1\otimes\cdots\otimes g_d)=\biggl(\int_{E_1}g_1\biggr)\otimes\cdots\otimes\biggl(\int_{E_d}g_d\biggr).
\end{equation}
\end{lemma}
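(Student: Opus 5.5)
The plan is to reduce everything to componentwise statements about scalar functions and then apply Fubini--Tonelli. By {\bf P6}, with $\ss=(s_1,\ldots,s_d)$ and $\tt=(t_1,\ldots,t_d)$, the entries of $(g_1\otimes\cdots\otimes g_d)(\xx)$ are
\[ \bigl((g_1\otimes\cdots\otimes g_d)(\xx)\bigr)_{\ii\jj}=(g_1)_{i_1j_1}(\xx_1)\cdots(g_d)_{i_dj_d}(\xx_d),\qquad\ii=\bu,\ldots,\ss,\quad\jj=\bu,\ldots,\tt. \]
Each entry of the tensor-product function is therefore a product of scalar functions $h_r=(g_r)_{i_rj_r}\in L^1(E_r)$, each depending on a separate group of variables. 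Since integrability and integrals of matrix-valued functions are defined componentwise, it suffices to prove the scalar statement: if $h_r\in L^1(E_r)$ for $r=1,\ldots,d$, then $h(\xx)=h_1(\xx_1)\cdots h_d(\xx_d)$ lies in $L^1(E_1\times\cdots\times E_d)$ and $\int h=\prod_r\int_{E_r}h_r$.

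For the scalar claim, measurability of $h$ on the product set follows because each $\xx\mapsto h_r(\xx_r)$ is the composition of a measurable function with a coordinate projection, hence measurable with respect to the product $\sigma$-algebra, and so measurable for the completed Lebesgue measure on $\mathbb R^{q_1+\ldots+q_d}$. Products of measurable functions are measurable. Tonelli's theorem, applied iteratively or by induction on $d$ with $\mathbb R^{q_1+\ldots+q_d}=\mathbb R^{q_1+\ldots+q_{d-1}}\times\mathbb R^{q_d}$, gives
\[ \int|h|=\prod_r\int|h_r|<\infty. \]
Hence $h\in L^1$, and Fubini's theorem then gives the product formula for $\int h$. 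The main technical point is this measurability and completion issue, and it is standard.

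Finally, we reassemble the entries. The $(\ii,\jj)$ entry of the left-hand side of \eqref{conto-g} is $\prod_r\int_{E_r}(g_r)_{i_rj_r}$. By {\bf P6} applied to the constant matrices $\int_{E_r}g_r$, the $(\ii,\jj)$ entry of the right-hand side of \eqref{conto-g} is the same product. This proves \eqref{conto-g}.
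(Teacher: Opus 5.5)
Your proposal is correct and follows essentially the same route as the paper. Both reduce to entries via {\bf P6}, apply Fubini--Tonelli to the absolute values to get integrability and then to the entries themselves to get the product of integrals, and reassemble via {\bf P6}. Your check that the product function is measurable with respect to the product Lebesgue $\sigma$-algebra is a detail the paper leaves implicit.
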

\begin{proof}
Let $\ss=(s_1,\ldots,s_d)$ and $\tt=(t_1,\ldots,t_d)$. For every $\ii=\bu,\ldots,\ss$ and $\jj=\bu,\ldots,\tt$, we have
\begin{align*}
\int_{E_1\times\cdots\times E_d}|(g_1\otimes\cdots\otimes g_d)_{\ii\jj}|&=\int_{E_1\times\cdots\times E_d}|((g_1\otimes\cdots\otimes g_d)(\xx))_{\ii\jj}|{\rm d}\xx\\
&=\int_{E_1\times\cdots\times E_d}|(g_1(\xx_1)\otimes\cdots\otimes g_d(\xx_d))_{\ii\jj}|{\rm d}\xx_1\cdots{\rm d}\xx_d\quad\mbox{\footnotesize(by \eqref{tpfun})}\\
&=\int_{E_1\times\cdots\times E_d}|(g_1(\xx_1))_{i_1j_1}\cdots(g_d(\xx_d))_{i_dj_d}|{\rm d}\xx_1\cdots{\rm d}\xx_d\quad\mbox{\footnotesize(by {\bf P6})}\\
&=\int_{E_1}|(g_1(\xx_1))_{i_1j_1}|{\rm d}\xx_1\cdots\int_{E_d}|(g_d(\xx_d))_{i_dj_d}|{\rm d}\xx_d\quad\mbox{\footnotesize(by Fubini's theorem).}
\end{align*}
The latter product is a finite number, because $g_i\in L^1(E_i)$ for every $i=1,\ldots,d$ by assumption. We therefore infer that $g_1\otimes\cdots\otimes g_d\in L^1(E_1\times\cdots\times E_d)$. Moreover, keeping in mind that every integral of a matrix-valued function is computed componentwise and repeating the previous equalities after removing everywhere the absolute values, we obtain, for every $\ii=\bu,\ldots,\ss$ and $\jj=\bu,\ldots,\tt$,
\begin{align*}
\biggl(\int_{E_1\times\cdots\times E_d}(g_1\otimes\cdots\otimes g_d)\biggr)_{\ii\jj}&=\int_{E_1\times\cdots\times E_d}(g_1\otimes\cdots\otimes g_d)_{\ii\jj}\\
&=\int_{E_1}(g_1(\xx_1))_{i_1j_1}{\rm d}\xx_1\cdots\int_{E_d}(g_d(\xx_d))_{i_dj_d}{\rm d}\xx_d\\
&=\biggl(\int_{E_1}g_1(\xx_1){\rm d}\xx_1\biggr)_{i_1j_1}\cdots\biggl(\int_{E_d}g_d(\xx_d){\rm d}\xx_d\biggr)_{i_dj_d}\\
&=\biggl(\biggl(\int_{E_1}g_1(\xx_1){\rm d}\xx_1\biggr)\otimes\cdots\otimes\biggl(\int_{E_d}g_d(\xx_d){\rm d}\xx_d\biggr)\biggr)_{\ii\jj}\quad\mbox{\footnotesize(by {\bf P6})}\\
&=\biggl(\biggl(\int_{E_1}g_1\biggr)\otimes\cdots\otimes\biggl(\int_{E_d}g_d\biggr)\biggr)_{\ii\jj}.
\end{align*}
This proves \eqref{conto-g}.
\end{proof}

\begin{corollary}\label{tpfc}
Let $d,q_1,\ldots,q_d,s_1,\ldots,s_d,t_1,\ldots,t_d$ be positive integers, let $f_i:[-\pi,\pi]^{q_i}\to\mathbb C^{s_i\times t_i}$ be a function in $L^1([-\pi,\pi]^{q_i})$ for every $i=1,\ldots,d$, and consider the tensor-product function
\begin{equation}\label{tpfun-f}
f_1\otimes\cdots\otimes f_d:[-\pi,\pi]^{q_1+\ldots+q_d}\to\mathbb C^{(s_1\cdots s_d)\times(t_1\cdots t_d)},\qquad(f_1\otimes\cdots\otimes f_d)(\btheta)=f_1(\btheta_1)\otimes\cdots\otimes f_d(\btheta_d),
\end{equation}
where $\btheta=(\btheta_1,\ldots,\btheta_d)$ is decomposed in such a way that $\btheta_i\in[-\pi,\pi]^{q_i}$ for every $i=1,\ldots,d$. Then, the Fourier coefficients of \eqref{tpfun-f} are given by
\begin{equation}\label{conto}
(f_1\otimes\cdots\otimes f_d)_\kk=(f_1)_{\kk_1}\otimes\cdots\otimes(f_d)_{\kk_d},\qquad\kk\in\mathbb Z^{q_1+\ldots+q_d},
\end{equation}
where $\kk=(\kk_1,\ldots,\kk_d)$ is decomposed in such a way that $\kk_i\in\mathbb Z^{q_i}$ for every $i=1,\ldots,d$.
\end{corollary}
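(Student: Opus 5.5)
The plan is to apply Lemma~\ref{tp-int} directly to the integrand defining the Fourier coefficient \eqref{fc-def}. Fix $\kk=(\kk_1,\ldots,\kk_d)\in\mathbb Z^{q_1+\ldots+q_d}$ with $\kk_i\in\mathbb Z^{q_i}$, and write $\btheta=(\btheta_1,\ldots,\btheta_d)$ accordingly. The first step is to note that the scalar product splits as $\kk\cdot\btheta=\kk_1\cdot\btheta_1+\ldots+\kk_d\cdot\btheta_d$. Hence
\[ {\rm e}^{-{\rm i}\kk\cdot\btheta}=\prod_{i=1}^d{\rm e}^{-{\rm i}\kk_i\cdot\btheta_i}. \]
Since scalars factor through tensor products by bilinearity ({\bf P2}), this gives
\[ (f_1\otimes\cdots\otimes f_d)(\btheta)\,{\rm e}^{-{\rm i}\kk\cdot\btheta}=g_1(\btheta_1)\otimes\cdots\otimes g_d(\btheta_d),\qquad g_i(\btheta_i)=f_i(\btheta_i){\rm e}^{-{\rm i}\kk_i\cdot\btheta_i}. \]
In other words, the integrand is exactly the tensor-product function $g_1\otimes\cdots\otimes g_d$ in the sense of \eqref{tpfun}, with $E_i=[-\pi,\pi]^{q_i}$.

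The second step checks the hypotheses of Lemma~\ref{tp-int}. Each $g_i$ lies in $L^1([-\pi,\pi]^{q_i})$, because $|{\rm e}^{-{\rm i}\kk_i\cdot\btheta_i}|=1$ and $f_i\in L^1$. The lemma then guarantees two things. First, $f_1\otimes\cdots\otimes f_d$ (the case $\kk=\bz$) is in $L^1([-\pi,\pi]^{q_1+\ldots+q_d})$, so its Fourier coefficients are well defined. Second, the integral of $g_1\otimes\cdots\otimes g_d$ over $[-\pi,\pi]^{q_1+\ldots+q_d}$ equals $\bigl(\int g_1\bigr)\otimes\cdots\otimes\bigl(\int g_d\bigr)$.

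The final step handles the normalizing constant. Since $(2\pi)^{q_1+\ldots+q_d}=\prod_i(2\pi)^{q_i}$, bilinearity ({\bf P2}) lets us distribute one factor $(2\pi)^{-q_i}$ to each tensor factor. This yields $(f_1)_{\kk_1}\otimes\cdots\otimes(f_d)_{\kk_d}$, which is \eqref{conto}.

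There is no real obstacle here. The only points needing care are the bookkeeping of the splitting of $\kk$ and $\btheta$ into blocks, and making sure the scalar exponentials can be pulled inside the tensor product; both are handled by {\bf P2}.
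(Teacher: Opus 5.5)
Your proposal is correct and follows the same route as the paper. You split the exponential, use \textbf{P2} to write the integrand as $g_1\otimes\cdots\otimes g_d$ with $g_i(\btheta_i)={\rm e}^{-{\rm i}\kk_i\cdot\btheta_i}f_i(\btheta_i)$, apply Lemma~\ref{tp-int}, and distribute the factors $(2\pi)^{-q_i}$. Your remark that Lemma~\ref{tp-int} (with $\kk=\bz$) guarantees $f_1\otimes\cdots\otimes f_d\in L^1$, so the Fourier coefficients are well defined, is a point the paper leaves implicit.
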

\begin{proof}
For every $\kk=(\kk_1,\ldots,\kk_d)\in\mathbb Z^{q_1+\ldots+q_d}$ with $\kk_i\in\mathbb Z^{q_i}$ for $i=1,\ldots,d$, we have
\begin{align*}
&(f_1\otimes\cdots\otimes f_d)_\kk=\frac1{(2\pi)^{q_1+\ldots+q_d}}\int_{[-\pi,\pi]^{q_1+\ldots+q_d}}(f_1\otimes\cdots\otimes f_d)(\btheta)\hspace{0.75pt}\e^{-\i\kk\cdot\btheta}{\rm d}\btheta\quad\mbox{\footnotesize(by \eqref{fc-def})}\\
&=\frac1{(2\pi)^{q_1}}\cdots\frac1{(2\pi)^{q_d}}\int_{[-\pi,\pi]^{q_1}\times\cdots\times[-\pi,\pi]^{q_d}}\e^{-\i\kk_1\cdot\btheta_1}f_1(\btheta_1)\otimes\cdots\otimes\e^{-\i\kk_d\cdot\btheta_d}f_d(\btheta_d)\hspace{0.75pt}{\rm d}\btheta_1\cdots{\rm d}\btheta_d\quad\mbox{\footnotesize(by \eqref{tpfun-f} and {\bf P2})}\\
&=\biggl(\frac1{(2\pi)^{q_1}}\int_{[-\pi,\pi]^{q_1}}\e^{-\i\kk_1\cdot\btheta_1}f_1(\btheta_1)\hspace{0.75pt}{\rm d}\btheta_1\biggr)\otimes\cdots\otimes\biggl(\frac1{(2\pi)^{q_d}}\int_{[-\pi,\pi]^{q_d}}\e^{-\i\kk_d\cdot\btheta_d}f_d(\btheta_d)\hspace{0.75pt}{\rm d}\btheta_d\biggr)\quad\mbox{\footnotesize(by Lemma~\ref{tp-int})}\\
&=(f_1)_{\kk_1}\otimes\cdots\otimes(f_d)_{\kk_d}\quad\mbox{\footnotesize(by \eqref{fc-def}).}\qedhere
\end{align*}
\end{proof}

\section{S.u.~sequences of matrices and tensor products}\label{sussec}

The importance of sparsely unbounded (s.u.)\ sequences of matrices within the theory of GLT sequences is mainly due to \cite[Proposition~5.5]{GLTbookI}, which shows that the product of two a.c.s.\ related to two s.u.\ sequences $\{A_n\}_n$ and $\{A_n'\}_n$ is an a.c.s.\ for the product sequence $\{A_nA_n'\}_n$.
In this paper, the importance of s.u.\ sequences of matrices is due to the ``tensor-product version'' of \cite[Proposition~5.5]{GLTbookI}, which will be proved in Theorem~\ref{a.c.s.otimes}. In this section, we collect the results on s.u.\ sequences of matrices that are necessary for proving Theorem~\ref{a.c.s.otimes} and its corollary (Corollary~\ref{a.c.s.otimes-d}).

\begin{definition}[\textbf{sparsely unbounded sequence of matrices}]\label{s.u.def}
Let $\{A_n\}_n$ be a sequence of matrices with $A_n$ of size $d_n\times e_n$. We say that $\{A_n\}_n$ is sparsely unbounded (s.u.)\ if for every $M>0$ there exists $n_M$ such that, for $n\ge n_M$,
\[ \frac{\#\{i\in\{1,\ldots,d_n\wedge e_n\}:\,\sigma_i(A_n)>M\}}{d_n\wedge e_n}\le r(M), \]
where $\lim_{M\to\infty}r(M)=0$.
\end{definition}

Proposition~\ref{s.u.pro} provides equivalent characterizations of s.u.\ sequences of matrices.
For sequences of square matrices, Proposition~\ref{s.u.pro} is just \cite[Proposition~2.18]{bg}.
For general sequences of matrices, Proposition~\ref{s.u.pro} can be proved by adapting the argument used for proving \cite[Proposition~2.18]{bg}.
For completeness, however, we provide the details of the proof.
To this end, we need the following Theorem~\ref{m.s}, which is sometimes referred to as the minimax principle for singular values.
In what follows, the notation $V\subseteq_{\rm s}\mathbb C^n$ means that $V$ is a subspace (and not only a subset) of $\mathbb C^n$.

\begin{theorem}\label{m.s}
Let $X\in\mathbb C^{m\times n}$ and let $\sigma_1(X)\ge\ldots\ge\sigma_{m\wedge n}(X)$ be the singular values of $X$ sorted in descending order. Then,
\[ \sigma_i(X)=\max_{\substack{V\subseteq_{\rm s}\mathbb C^n\\ \dim V=i}}\ \min_{\substack{\xx\in V\\ \|\xx\|=1}}\|X\xx\|=\min_{\substack{V\subseteq_{\rm s}\mathbb C^n\\ \dim V=n-i+1}}\ \max_{\substack{\xx\in V\\ \|\xx\|=1}}\|X\xx\|,\qquad i=1,\ldots,m\wedge n. \]
\end{theorem}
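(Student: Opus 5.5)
The plan is to reduce the statement to the spectral theorem for Hermitian matrices through the singular value decomposition. Write $X=U\Sigma W^*$ with $U\in\mathbb C^{m\times m}$ and $W\in\mathbb C^{n\times n}$ unitary and $\Sigma\in\mathbb C^{m\times n}$ having diagonal entries $\sigma_1(X)\ge\ldots\ge\sigma_{m\wedge n}(X)$. Since $\|X\xx\|^2=\xx^*X^*X\xx$ and $X^*X=W(\Sigma^T\Sigma)W^*$, the Hermitian matrix $X^*X\in\mathbb C^{n\times n}$ has eigenvalues $\sigma_1(X)^2\ge\ldots\ge\sigma_{m\wedge n}(X)^2$, followed by $n-(m\wedge n)$ zeros. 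Because $t\mapsto\sqrt t$ is increasing on $[0,\infty)$, both formulas reduce to the Courant--Fischer characterization
\[ \lambda_i(X^*X)=\max_{\dim V=i}\ \min_{\xx\in V,\,\|\xx\|=1}\xx^*X^*X\xx=\min_{\dim V=n-i+1}\ \max_{\xx\in V,\,\|\xx\|=1}\xx^*X^*X\xx \]
for $i\le m\wedge n$, with the eigenvalues sorted in descending order.

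To keep the argument self-contained, I will prove the max--min identity directly. Let $\ww_1,\ldots,\ww_n$ denote the columns of $W$, and write $\sigma_j=0$ for $j>m\wedge n$. Then
\[ \|X\xx\|^2=\sum_j\sigma_j^2\,|\ww_j^*\xx|^2. \]
\begin{itemize}
\item \emph{Lower bound.} Take $V=\mathrm{span}\{\ww_1,\ldots,\ww_i\}$. For every unit vector $\xx\in V$ we have $\|X\xx\|^2\ge\sigma_i^2$, so the max--min is at least $\sigma_i$.
\item \emph{Upper bound.} Let $V$ be any subspace of dimension $i$, and set $S=\mathrm{span}\{\ww_i,\ldots,\ww_n\}$, which has dimension $n-i+1$. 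Since $\dim V+\dim S=n+1>n$, the intersection $V\cap S$ contains a unit vector $\xx$, and for it $\|X\xx\|^2\le\sigma_i^2$. Hence the inner minimum over $V$ is at most $\sigma_i$.
\end{itemize}

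The min--max identity follows symmetrically. Choosing $V=\mathrm{span}\{\ww_i,\ldots,\ww_n\}$ gives the upper bound $\sigma_i$. For any $V$ of dimension $n-i+1$, intersecting with $\mathrm{span}\{\ww_1,\ldots,\ww_i\}$ produces a unit vector $\xx$ with $\|X\xx\|\ge\sigma_i$, which gives the lower bound.

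The main delicate point is the bookkeeping when $n>m$, where $X^*X$ has additional zero eigenvalues. This is handled by the convention $\sigma_j=0$ for $j>m\wedge n$ and by restricting to $i\le m\wedge n$, so that the index $i$ never exceeds the range in which singular values are defined. Apart from this, the only nontrivial ingredient is the dimension-counting argument showing that the intersection of the two subspaces is nontrivial. Alternatively, the result can simply be cited as standard (e.g. from Bhatia's \emph{Matrix Analysis}) as a consequence of Courant--Fischer applied to $X^*X$.
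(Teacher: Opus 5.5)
Your proposal is correct and follows the same route as the paper, which simply applies the Courant--Fischer minimax principle to the Hermitian matrix $X^*X$ (eigenvalues $\sigma_i(X)^2$ padded with $n-(m\wedge n)$ zeros) and takes square roots. Your additional SVD-based dimension-counting argument is a valid self-contained proof of that cited principle, covering both the max--min and min--max forms, so nothing is missing.
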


Theorem~\ref{m.s} follows immediately from the minimax principle for eigenvalues \cite[Corollary~III.1.2]{Bhatia} applied to the Hermitian matrix $X^*X$, whose eigenvalues are equal to the squares of the singular values of $X$ plus further $n-(m\wedge n)$ zero eigenvalues to match the size of $X^*X$.

\begin{proposition}\label{s.u.pro}
Let $\{A_n\}_n$ be a sequence of matrices with $A_n$ of size $d_n\times e_n$. The following conditions are equivalent.
\begin{enumerate}[nolistsep,leftmargin=*]
	\item $\{A_n\}_n$ is s.u.
	\item We have
	\[ \lim_{M\to\infty}\limsup_{n\to\infty}\frac{\#\{i\in\{1,\ldots,d_n\wedge e_n\}:\,\sigma_i(A_n)>M\}}{d_n\wedge e_n}=0. \]
	\item For every $M>0$ there exists $n_M$ such that, for $n\ge n_M$,
	\begin{equation*}
	A_n=\hat A_{n,M}+\tilde A_{n,M},\qquad{\rm rank}(\hat A_{n,M})\le r(M)(d_n\wedge e_n),\qquad \|\tilde A_{n,M}\|\le M,
	\end{equation*}
	where $\lim_{M\to\infty}r(M)=0$.
\end{enumerate}
\end{proposition}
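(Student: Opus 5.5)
I will prove the cycle of implications $1\Rightarrow2\Rightarrow3\Rightarrow1$. Throughout I write $\#_n(M)=\#\{i\in\{1,\ldots,d_n\wedge e_n\}:\sigma_i(A_n)>M\}$, which is nonincreasing in $M$.

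\emph{$1\Rightarrow2$.} If $\{A_n\}_n$ is s.u., then for every $M>0$ and all $n\ge n_M$ we have $\#_n(M)/(d_n\wedge e_n)\le r(M)$. Hence $\limsup_{n}\#_n(M)/(d_n\wedge e_n)\le r(M)$, and letting $M\to\infty$ gives condition~2. This step is immediate.

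\emph{$2\Rightarrow3$.} Let $\ell(M)=\limsup_n\#_n(M)/(d_n\wedge e_n)$. Then $\ell$ is nonincreasing and $\ell(M)\to0$. Set $r(M)=\ell(M)+1/M$; since this is strictly larger than $\ell(M)$, there is $n_M$ with $\#_n(M)\le r(M)(d_n\wedge e_n)$ for all $n\ge n_M$. Take an SVD $A_n=U_n\Sigma_nV_n^*$ with singular values sorted in descending order. Define $\hat A_{n,M}=U_n\hat\Sigma_nV_n^*$, where $\hat\Sigma_n$ keeps only the singular values exceeding $M$ (these are the first $\#_n(M)$ ones), and set $\tilde A_{n,M}=A_n-\hat A_{n,M}$. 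Then ${\rm rank}(\hat A_{n,M})=\#_n(M)\le r(M)(d_n\wedge e_n)$ and $\|\tilde A_{n,M}\|\le M$, as required.

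\emph{$3\Rightarrow1$.} This is the only place where Theorem~\ref{m.s} is needed. Fix $M$ and $n\ge n_M$, and let $k={\rm rank}(\hat A_{n,M})$. I claim that $\sigma_{k+1}(A_n)\le M$. To see this, use the min--max characterization with $V=\ker\hat A_{n,M}$, which has dimension $e_n-k$. For $\xx\in V$ with $\|\xx\|=1$,
\[ \|A_n\xx\|=\|\tilde A_{n,M}\xx\|\le M. \]
If $k+1\le d_n\wedge e_n$, then $\dim V=e_n-(k+1)+1$, so Theorem~\ref{m.s} with $i=k+1$ gives $\sigma_{k+1}(A_n)\le M$. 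If instead $k\ge d_n\wedge e_n$, the count bound below is trivial anyway. Since the singular values are sorted, at most $k$ of them exceed $M$, so
\[ \#_n(M)\le k\le r(M)(d_n\wedge e_n). \]
This is exactly the s.u.\ condition with the same function $r$.

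No step is a real obstacle. The only care needed is the dimension bookkeeping in the minimax step, including the edge case $k\ge d_n\wedge e_n$, and the choice $r(M)=\ell(M)+1/M$ in $2\Rightarrow3$, which converts a $\limsup$ into an eventual bound.
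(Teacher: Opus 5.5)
Your proof is correct and follows essentially the paper's route. Your $2\Rightarrow3$ chains the paper's $2\Rightarrow1$ (choosing $r(M)=\ell(M)+1/M$ to turn the $\limsup$ into an eventual bound) with its $1\Rightarrow3$ (truncating the SVD at level $M$). For $3\Rightarrow1$ you plug $V=\ker\hat A_{n,M}$ into the min--max form of Theorem~\ref{m.s}, whereas the paper uses the max--min form to obtain $\sigma_i(A_n)\le\sigma_i(\hat A_{n,M})+\|\tilde A_{n,M}\|$ and then counts the nonzero singular values of $\hat A_{n,M}$; this is only a minor, equally valid variant.
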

\begin{proof}
(1$\implies$2) Suppose that $\{A_n\}_n$ is s.u. Then, for every $M>0$ there exists $n_M$ such that, for $n\ge n_M$,
\[ \frac{\#\{i\in\{1,\ldots,d_n\wedge e_n\}:\,\sigma_i(A_n)>M\}}{d_n\wedge e_n}\le r(M), \]
where $\lim_{M\to\infty}r(M)=0$. Therefore, for every $M>0$, we have
\[ \limsup_{n\to\infty}\frac{\#\{i\in\{1,\ldots,d_n\wedge e_n\}:\,\sigma_i(A_n)>M\}}{d_n\wedge e_n}\le r(M). \]
As a consequence,
\[ \lim_{M\to\infty}\limsup_{n\to\infty}\frac{\#\{i\in\{1,\ldots,d_n\wedge e_n\}:\,\sigma_i(A_n)>M\}}{d_n\wedge e_n}=0. \]

(2$\implies$1) Suppose that condition~2 is met. For every $M>0$, define
\[\delta(M)=\limsup_{n\to\infty}\frac{\#\{i\in\{1,\ldots,d_n\wedge e_n\}:\,\sigma_i(A_n)>M\}}{d_n\wedge e_n}\in[0,1] \]
and note that (obviously)
\[ \limsup_{n\to\infty}\frac{\#\{i\in\{1,\ldots,d_n\wedge e_n\}:\,\sigma_i(A_n)>M\}}{d_n\wedge e_n}<\delta(M)+\frac1M. \]
Hence, by definition of $\limsup$, for every $M>0$, the sequence 
\[ \frac{\#\{i\in\{1,\ldots,d_n\wedge e_n\}:\,\sigma_i(A_n)>M\}}{d_n\wedge e_n} \] 
is eventually less than $r(M)=\delta(M)+1/M$, i.e., there exists $n_M$ such that, for $n\ge n_M$,
\[ \frac{\#\{i\in\{1,\ldots,d_n\wedge e_n\}:\,\sigma_i(A_n)>M\}}{d_n\wedge e_n}\le r(M). \]
Since $r(M)\to0$ as $M\to\infty$ by condition~2, condition~1 is proved.

(1$\implies$3) Suppose that $\{A_n\}_n$ is s.u.: for every $M>0$ there exists $n_M$ such that, for $n\ge n_M$,
\[ \frac{\#\{i\in\{1,\ldots,d_n\wedge e_n\}:\,\sigma_i(A_n)>M\}}{d_n\wedge e_n}\le r(M), \]
where $\lim_{M\to\infty}r(M)=0$. Let $A_n=U_n\Sigma_n V_n^*$ be a singular value decomposition of $A_n$. Let $\hat\Sigma_{n,M}$ be the matrix obtained from $\Sigma_n$ by setting to 0 all singular values of $A_n$ that are less than or equal to $M$, and let $\tilde\Sigma_{n,M}=\Sigma_n-\hat\Sigma_{n,M}$ be the matrix obtained from $\Sigma_n$ by setting to 0 all singular values of $A_n$ that exceed $M$. Then,
\[ A_n=U_n\Sigma_n V_n^*=U_n\hat\Sigma_{n,M}V_n^*+U_n\tilde\Sigma_{n,M}V_n^*=\hat A_{n,M}+\tilde A_{n,M}, \]
where $\hat A_{n,M}=U_n\hat\Sigma_{n,M}V_n^*$ and $\tilde A_{n,M}=U_n\tilde\Sigma_{n,M}V_n^*$ satisfy, for $n\ge n_M$,
\begin{align*}
{\rm rank}(\hat A_{n,M})&=\#\{i\in\{1,\ldots,d_n\wedge e_n\}:\,\sigma_i(A_n)>M\}\le r(M)(d_n\wedge e_n),\\
\|\tilde A_{n,M}\|&=\sigma_{\max}(\tilde A_{n,M})\le M.
\end{align*}

(3$\implies$1) Suppose that condition~3 holds. Then, for every $M>0$ there exists $n_M$ such that, for $n\ge n_M$,
\[ A_n=\hat A_{n,M}+\tilde A_{n,M},\qquad{\rm rank}(\hat A_{n,M})\le r(M)(d_n\wedge e_n),\qquad \|\tilde A_{n,M}\|\le M, \]
where $\lim_{M\to\infty}r(M)=0$. Assume that the singular values of matrices are sorted in descending order.
Then, by the minimax principle for singular values (Theorem~\ref{m.s}), for every $M>0$, every $n\ge n_M$, and every $i=1,\ldots,d_n\wedge e_n$, we have
\begin{align}
\sigma_i(A_n)&=\max_{\substack{V\subseteq_s\mathbb C^{e_n}\\\dim V=i}}\ \min_{\substack{\xx\in V\\\|\xx\|=1}}\|A_n\xx\|\le\max_{\substack{V\subseteq_s\mathbb C^{e_n}\\\dim V=i}}\ \min_{\substack{\xx\in V\\\|\xx\|=1}}\Bigl(\|\hat A_{n,M}\xx\|+\|\tilde A_{n,M}\xx\|\Bigr)\notag\\
&\le\max_{\substack{V\subseteq_s\mathbb C^{e_n}\\\dim V=i}}\ \min_{\substack{\xx\in V\\\|\xx\|=1}}\Bigl(\|\hat A_{n,M}\xx\|+\|\tilde A_{n,M}\|\Bigr)=\sigma_i(\hat A_{n,M})+\|\tilde A_{n,M}\|\le\sigma_i(\hat A_{n,M})+M.\label{sigma.0}
\end{align}
Since ${\rm rank}(\hat A_{n,M})\le r(M)(d_n\wedge e_n)$, the matrix $\hat A_{n,M}$ has at most $r(M)(d_n\wedge e_n)$ nonzero singular values. Therefore, by \eqref{sigma.0}, for every $M>0$ and every $n\ge n_M$, the matrix $A_n$ has at most $r(M)(d_n\wedge e_n)$ singular values greater than $M$, i.e., 
\[ \#\{i\in\{1,\ldots,d_n\wedge e_n\}:\,\sigma_i(A_n)>M\}\le r(M)(d_n\wedge e_n), \]
or, equivalently,
\[ \frac{\#\{i\in\{1,\ldots,d_n\wedge e_n\}:\,\sigma_i(A_n)>M\}}{d_n\wedge e_n}\le r(M). \]
This means that $\{A_n\}_n$ is s.u.
\end{proof}

In Proposition~\ref{d->su}, we show that any sequence of matrices enjoying a singular value distribution is s.u.
For sequences of square matrices, Proposition~\ref{d->su} is just \cite[Proposition~2.20]{bg}.
For general sequences of matrices, Proposition~\ref{d->su} can be proved by adapting the argument used for proving \cite[Proposition~2.20]{bg}.
For completeness, however, we provide the details of the proof. In what follows, $\chi_E$ denotes the characteristic (indicator) function of the set $E$.

\begin{proposition}\label{d->su}
If $\{A_n\}_n\sim_\sigma f$ then $\{A_n\}_n$ is s.u.
\end{proposition}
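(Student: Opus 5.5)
We verify condition~2 of Proposition~\ref{s.u.pro}, which is equivalent to the s.u.\ property, by testing the distribution relation $\{A_n\}_n\sim_\sigma f$ with cutoff functions. Let $f:D\subset\mathbb R^k\to\mathbb C^{s\times t}$ be as in Definition~\ref{def-distribution}. For each $M>0$ we fix a continuous function $F_M:\mathbb R\to[0,1]$ with bounded support, $F_M=1$ on $[-M/2,M/2]$, $F_M=0$ outside $[-M,M]$, and linear in between. Then $1-F_M(x)\ge\chi_{\{x>M\}}(x)$ for $x\ge0$, and $1-F_M(x)\le\chi_{\{x>M/2\}}(x)$ for $x\ge0$.

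\emph{Step 1 (counting bound).} For every $n$,
\[ \frac{\#\{i:\sigma_i(A_n)>M\}}{d_n\wedge e_n}\le\frac1{d_n\wedge e_n}\sum_{i=1}^{d_n\wedge e_n}\bigl(1-F_M(\sigma_i(A_n))\bigr)=1-\frac1{d_n\wedge e_n}\sum_{i=1}^{d_n\wedge e_n}F_M(\sigma_i(A_n)). \]
Since $F_M\in C_c(\mathbb R)$, Definition~\ref{def-distribution} lets us pass to the limit. This gives
\[ \limsup_{n\to\infty}\frac{\#\{i:\sigma_i(A_n)>M\}}{d_n\wedge e_n}\le\frac1{\mu_k(D)}\int_D\frac{\sum_{i=1}^{s\wedge t}\bigl(1-F_M(\sigma_i(f(\xx)))\bigr)}{s\wedge t}{\rm d}\xx, \]
where we have used that $1$ integrates to $\mu_k(D)$ after normalization.

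\emph{Step 2 (limit in $M$).} The integrand is measurable by \cite[Lemma~2.1]{bg}. It is bounded by $1$ and by $\frac{\#\{i:\sigma_i(f(\xx))>M/2\}}{s\wedge t}$. For each fixed $\xx$ the singular values of $f(\xx)$ are finite, so this bound vanishes once $M/2$ exceeds $\|f(\xx)\|$. The integrand therefore tends to $0$ pointwise as $M\to\infty$. Since $\mu_k(D)<\infty$, the dominated convergence theorem gives that the right-hand side tends to $0$. This is exactly condition~2 of Proposition~\ref{s.u.pro}, so $\{A_n\}_n$ is s.u.

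The only delicate point is Step 2. Pointwise convergence alone is not enough; we also need dominated convergence, which holds here because the constant $1$ is integrable over $D$ thanks to $\mu_k(D)<\infty$ and because the integrand is measurable. The rest is a routine choice of test function.
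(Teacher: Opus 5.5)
Your proof is correct and follows essentially the same route as the paper: you test $\{A_n\}_n\sim_\sigma f$ with a cutoff $F_M$, bound the proportion of singular values exceeding $M$ by $1-\frac{1}{d_n\wedge e_n}\sum_i F_M(\sigma_i(A_n))$, and use dominated convergence to verify condition~2 of Proposition~\ref{s.u.pro}. The only difference is cosmetic: you show that the averaged integrand built from $1-F_M$ tends to $0$ pointwise, whereas the paper shows that the average of $F_M(\sigma_i(f(\xx)))$ tends to $1$.
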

\begin{proof}
Suppose that $\{A_n\}_n\sim_\sigma f$. For every $n$, we denote by $d_n\times e_n$ the size of $A_n$. Moreover, we denote by $D\subset\mathbb R^k$ and $\mathbb C^{s\times t}$ the domain and codomain of the measurable function $f:D\to\mathbb C^{s\times t}$. 
Fix $M>0$ and take $F_M\in C_c(\mathbb R)$ such that $F_M=1$ over $[0,M/2]$, $F_M=0$ over $[M,\infty)$ and $0\le F_M\le 1$ over $\mathbb R$. Note that $F_M\le\chi_{[0,M]}$ over $[0,\infty)$. Then, for every $n$,
\begin{align*}
&\frac{\#\{i\in\{1,\ldots,d_n\wedge e_n\}:\,\sigma_i(A_n)>M\}}{d_n\wedge e_n}\\
&=1-\frac{\#\{i\in\{1,\ldots,d_n\wedge e_n\}:\,\sigma_i(A_n)\le M\}}{d_n\wedge e_n}=1-\frac1{d_n\wedge e_n}\sum_{i=1}^{d_n\wedge e_n}\chi_{[0,M]}(\sigma_i(A_n))\\
&\le1-\frac1{d_n\wedge e_n}\sum_{i=1}^{d_n\wedge e_n}F_M(\sigma_i(A_n))\xrightarrow{n\to\infty}1-\frac1{\mu_k(D)}\int_D\frac{\sum_{i=1}^{s\wedge t}F_M(\sigma_i(f(\xx)))}{s\wedge t}{\rm d}\xx.
\end{align*}
As a consequence,
\[ \limsup_{n\to\infty}\frac{\#\{i\in\{1,\ldots,d_n\wedge e_n\}:\,\sigma_i(A_n)>M\}}{d_n\wedge e_n}\le1-\frac1{\mu_k(D)}\int_D\frac{\sum_{i=1}^{s\wedge t}F_M(\sigma_i(f(\xx)))}{s\wedge t}{\rm d}\xx. \]
Since $\frac{1}{s\wedge t}\sum_{i=1}^{s\wedge t}F_M(\sigma_i(f(\xx)))\to1$ pointwise as $M\to\infty$ and $\bigl|\frac{1}{s\wedge t}\sum_{i=1}^{s\wedge t}F_M(\sigma_i(f(\xx)))\bigr|\le1$ for every $M>0$ and every $\xx\in D$, the dominated convergence theorem \cite[Theorem~1.34]{Rudinone} yields
\[ \lim_{M\to\infty}\int_D\frac{\sum_{i=1}^{s\wedge t}F_M(\sigma_{i}(f(\xx)))}{s\wedge t}{\rm d}\xx=\mu_k(D). \]
Thus,
\[ \lim_{M\to\infty}\limsup_{n\to\infty}\frac{\#\{i\in\{1,\ldots,d_n\wedge e_n\}:\,\sigma_i(A_n)>M\}}{d_n\wedge e_n}=0. \]
This means that condition~2 in Proposition~\ref{s.u.pro} is satisfied, i.e., $\{A_n\}_n$ is s.u.
\end{proof}

\begin{remark}\label{GLT->s.u.}
Any GLT sequence is s.u. Indeed, let $\{X_n\}_n$ be a GLT sequence with symbol $\kappa$ as per Definition~\ref{GLT_def}. Then, we have $\{X_n\}_n\sim_\sigma\kappa$ by {\bf GLT1}. Thus, $\{X_n\}_n$ is s.u.\ by Proposition~\ref{d->su}.
\end{remark}

The last result we need about s.u.\ sequences of matrices is Proposition~\ref{s.u.tensor}, which shows that the tensor product of two s.u.\ sequences of matrices is an s.u.\ sequence of matrices.

\begin{proposition}\label{s.u.tensor}
If $\{A_n\}_n$ and $\{A_n'\}_n$ are s.u.\ sequences of matrices, then $\{A_n\otimes A_n'\}_n$ is s.u.
\end{proposition}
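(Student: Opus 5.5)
I would use the characterization in condition~3 of Proposition~\ref{s.u.pro}. Let $A_n$ have size $d_n\times e_n$ and $A_n'$ have size $d_n'\times e_n'$, so that $A_n\otimes A_n'$ has size $d_nd_n'\times e_ne_n'$. Fix $M>0$ and apply condition~3 to both sequences with threshold $\sqrt M$. For $n\ge\max(n_{\sqrt M},n'_{\sqrt M})$ this gives
\[ A_n=\hat A_{n}+\tilde A_{n},\qquad A_n'=\hat A_{n}'+\tilde A_{n}', \]
with ${\rm rank}(\hat A_n)\le r(\sqrt M)(d_n\wedge e_n)$, ${\rm rank}(\hat A_n')\le r'(\sqrt M)(d_n'\wedge e_n')$, and $\|\tilde A_n\|,\|\tilde A_n'\|\le\sqrt M$. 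Here we suppress the index $M$. Expanding by bilinearity ({\bf P2}) gives
\[ A_n\otimes A_n'=\underbrace{\hat A_n\otimes A_n'+\tilde A_n\otimes\hat A_n'}_{\hat B_{n,M}}+\underbrace{\tilde A_n\otimes\tilde A_n'}_{\tilde B_{n,M}}. \]
By {\bf P5}, $\|\tilde B_{n,M}\|=\|\tilde A_n\|\,\|\tilde A_n'\|\le M$.

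For the rank term, {\bf P5} and subadditivity of rank give
\[ {\rm rank}(\hat B_{n,M})\le{\rm rank}(\hat A_n)\,{\rm rank}(A_n')+{\rm rank}(\tilde A_n)\,{\rm rank}(\hat A_n'). \]
We bound ${\rm rank}(A_n')\le d_n'\wedge e_n'$ and ${\rm rank}(\tilde A_n)\le d_n\wedge e_n$. This yields
\[ {\rm rank}(\hat B_{n,M})\le\bigl(r(\sqrt M)+r'(\sqrt M)\bigr)(d_n\wedge e_n)(d_n'\wedge e_n'). \]
The elementary inequality $(a\wedge b)(a'\wedge b')\le aa'\wedge bb'$, valid for positive numbers, then gives ${\rm rank}(\hat B_{n,M})\le R(M)(d_nd_n'\wedge e_ne_n')$ with $R(M)=r(\sqrt M)+r'(\sqrt M)\to0$ as $M\to\infty$. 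Condition~3 of Proposition~\ref{s.u.pro} thus holds for $\{A_n\otimes A_n'\}_n$, which is therefore s.u.

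The only delicate point is the bookkeeping of sizes in the rank bound: the normalization is by $d_nd_n'\wedge e_ne_n'$, not by the product of the minima. The inequality above resolves this in the favourable direction. One should also check that the sizes $d_nd_n'$ and $e_ne_n'$ tend to $\infty$, which is immediate, and that the two sequences are indexed over a common infinite set of $n$, which is implicit in forming $\{A_n\otimes A_n'\}_n$.
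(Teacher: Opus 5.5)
Your proof is correct and follows essentially the same route as the paper: the same splitting $A_n\otimes A_n'=(\hat A_n\otimes A_n'+\tilde A_n\otimes\hat A_n')+\tilde A_n\otimes\tilde A_n'$ obtained from condition~3 of Proposition~\ref{s.u.pro}, with {\bf P5} bounding the rank and the norm. The only difference is cosmetic. You work at threshold $\sqrt M$ and so get the norm bound $M$ directly. The paper works at threshold $M$, obtains $\|\tilde B_{n,M}\|\le M^2$ with rank fraction $2r(M)$, and leaves the reparametrization $M\mapsto\sqrt M$ implicit.
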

\begin{proof}
Let $\{A_n\}_n$ and $\{A_n'\}_n$ be s.u.\ sequences of matrices.
For every $n$, we denote by $d_n\times e_n$ the size of $A_n$ and by $d_n'\times e_n'$ the size of $A_n'$.
By Proposition~\ref{s.u.pro}, for every $M>0$ there exists $n_M$ such that, for $n\ge n_M$,
\begin{align*}
A_n=\hat A_{n,M}+\tilde A_{n,M},\qquad {\rm rank}(\hat A_{n,M})\le r(M)(d_n\wedge e_n),\qquad \|\tilde A_{n,M}\|\le M,\\
A_n'=\hat A_{n,M}'+\tilde A_{n,M}',\qquad {\rm rank}(\hat A_{n,M}')\le r(M)(d_n'\wedge e_n'),\qquad \|\tilde A_{n,M}'\|\le M,
\end{align*}
where $\lim_{M\to\infty}r(M)=0$. Thus, for every $M>0$ and every $n\ge n_M$, we have
\begin{align*}
A_n\otimes A_n'&=\hat A_{n,M}\otimes A_n'+\tilde A_{n,M}\otimes\hat A_{n,M}'+\tilde A_{n,M}\otimes\tilde A_{n,M}'=\hat B_{n,M}+\tilde B_{n,M},
\end{align*}
where the matrices $\hat B_{n,M}=\hat A_{n,M}\otimes A'_n+\tilde A_{n,M}\otimes\hat A_{n,M}'$ and $\tilde B_{n,M}=\tilde A_{n,M}\otimes\tilde A_{n,M}'$
satisfy, by {\bf P5}, the following properties:
\begin{align*}
{\rm rank}(\hat B_{n,M})&\le2r(M)(d_n\wedge e_n)(d_n'\wedge e_n')\le2r(M)((d_nd_n')\wedge(e_ne_n')),\\
\|\tilde B_{n,M}\|&\le M^2.
\end{align*}
We conclude that $\{A_n\otimes A_n'\}_n$ is s.u.\ because condition~3 in Proposition~\ref{s.u.pro} is satisfied.
\end{proof}

\section{Tensor product of a.c.s.}\label{sec:tpacs}

Theorem~\ref{a.c.s.otimes} is the generalization of \cite[Proposition~5.5]{GLTbookI} to the case where the standard matrix product is replaced by the tensor product.
It shows that the tensor product of two a.c.s.\ related to two s.u.\ sequences of matrices $\{A_n\}_n$ and $\{A_n'\}_n$ is an a.c.s.\ for the tensor-product sequence $\{A_n\otimes A_n'\}_n$.

\begin{theorem}\label{a.c.s.otimes}
Let $\{A_n\}_n$, $\{A_n'\}_n$ be s.u.\ sequences of matrices 
and suppose that
\begin{itemize}[nolistsep,leftmargin=*]
	\item $\{B_{n,m}\}_n\xrightarrow{\rm a.c.s.}\{A_n\}_n$,
	\item $\{B_{n,m}'\}_n\xrightarrow{\rm a.c.s.}\{A_n'\}_n$.
\end{itemize}
Then $\{B_{n,m}\otimes B_{n,m}'\}_n\xrightarrow{\rm a.c.s.}\{A_n\otimes A_n'\}_n$.
\end{theorem}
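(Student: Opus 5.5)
Write $A_n=B_{n,m}+R_{n,m}+N_{n,m}$ and $A_n'=B_{n,m}'+R_{n,m}'+N_{n,m}'$ for $n\ge n_m$, with ${\rm rank}(R_{n,m})\le c(m)(d_n\wedge e_n)$, $\|N_{n,m}\|\le\omega(m)$, and analogous bounds for the primed quantities with sizes $d_n'\times e_n'$. We may take common functions $c(m),\omega(m)$ and a common $n_m$ by choosing maxima. The plan is to expand $A_n\otimes A_n'-B_{n,m}\otimes B_{n,m}'$ bilinearly (\textbf{P2}) and sort the terms into a low-rank part and a small-norm part. The point to watch is that $B_{n,m}$ itself need not have bounded norm, so we cannot use $\|B_{n,m}\|$ in the estimates. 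Instead we write each term with $A_n$ or $A_n'$ as a factor, and then use the s.u.\ property through condition~3 of Proposition~\ref{s.u.pro}.

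We use the identity
\[ A_n\otimes A_n'-B_{n,m}\otimes B_{n,m}'=(A_n-B_{n,m})\otimes A_n'+B_{n,m}\otimes(A_n'-B_{n,m}'), \]
and rewrite $B_{n,m}=A_n-R_{n,m}-N_{n,m}$ in the second term. This gives
\[ A_n\otimes A_n'-B_{n,m}\otimes B_{n,m}'=(R_{n,m}+N_{n,m})\otimes A_n'+(A_n-R_{n,m}-N_{n,m})\otimes(R_{n,m}'+N_{n,m}'). \]
Every term that contains an $R$ factor has rank at most $c(m)$ times the product of the relevant sizes, by \textbf{P5}. Using $(d_n\wedge e_n)(d_n'\wedge e_n')\le (d_nd_n')\wedge(e_ne_n')$, each such term is a fraction at most $c(m)$ of the tensor size. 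The remaining terms are $N_{n,m}\otimes A_n'$, $A_n\otimes N_{n,m}'$ and $N_{n,m}\otimes N_{n,m}'$. The last has norm at most $\omega(m)^2$.

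The main obstacle is $N_{n,m}\otimes A_n'$ and $A_n\otimes N_{n,m}'$, because $A_n,A_n'$ may have unbounded norm. Here we invoke Proposition~\ref{s.u.pro}(3). Choose $M=M(m)\to\infty$ slowly, for instance $M(m)=\omega(m)^{-1/2}$, or any $M(m)\to\infty$ with $M(m)\omega(m)\to0$ (if $\omega(m)=0$, take $M(m)=m$). Then for $n\ge n_{M(m)}$ we split $A_n'=\hat A'_{n,M}+\tilde A'_{n,M}$. The piece $N_{n,m}\otimes\hat A'_{n,M}$ has rank at most $r(M(m))$ times the tensor size. The piece $N_{n,m}\otimes\tilde A'_{n,M}$ has norm at most $\omega(m)M(m)$. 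We treat $A_n\otimes N'_{n,m}$ symmetrically.

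Collecting terms, for $n\ge\max(n_m,n_{M(m)})$ we obtain $A_n\otimes A_n'=B_{n,m}\otimes B_{n,m}'+\mathcal R_{n,m}+\mathcal N_{n,m}$, with rank ratio at most $C\,(c(m)+r(M(m)))$ and norm at most $2\omega(m)M(m)+\omega(m)^2$, where $C$ is a fixed constant counting the low-rank terms. Both bounds tend to $0$ as $m\to\infty$, so Definition~\ref{a.c.s.} is satisfied.
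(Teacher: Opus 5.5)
Your proof is correct and follows essentially the same route as the paper. You expand bilinearly, rewrite $B_{n,m}$ as $A_n-R_{n,m}-N_{n,m}$ wherever it multiplies a small-norm factor, and handle $N_{n,m}\otimes A_n'$ and $A_n\otimes N_{n,m}'$ through the s.u.\ splitting of Proposition~\ref{s.u.pro}(3) with $M_m=\omega(m)^{-1/2}$, which gives the paper's norm bound $2\omega(m)^{1/2}+\omega(m)^2$; the only differences are cosmetic (the paper keeps $B_{n,m}\otimes R_{n,m}'$ unexpanded, giving the rank constant $3c(m)+2r(M_m)$ instead of your slightly larger one), and your treatment of the degenerate case $\omega(m)=0$ is a small point of care the paper leaves implicit.
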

\begin{proof}
For every $n$, we denote by $d_n\times e_n$ the size of $A_n$ and by $d_n'\times e_n'$ the size of $A_n'$.
By hypothesis, for every $m$ there exists $n_m$ such that, for $n\ge n_m$,
\begin{alignat*}{5}
A_n&=B_{n,m}+R_{n,m}+N_{n,m}, & \qquad {\rm rank}(R_{n,m})&\le c(m)(d_n\wedge e_n), & \qquad \|N_{n,m}\|&\le\omega(m),\\
A_n'&=B_{n,m}'+R_{n,m}'+N_{n,m}', & \qquad {\rm rank}(R_{n,m}')&\le c(m)(d_n'\wedge e_n'), & \qquad \|N_{n,m}'\|&\le\omega(m),
\end{alignat*}
where $\lim_{m\to\infty} c(m)=\lim_{m\to\infty}\omega(m)=0$. Hence, for every $m$ and every $n\ge n_m$,
\[ A_n\otimes A_n'=B_{n,m}\otimes B'_{n,m}+B_{n,m}\otimes R'_{n,m}+\boxed{B_{n,m}\otimes N'_{n,m}}+R_{n,m}\otimes A'_n+\boxed{N_{n,m}\otimes A'_n}\,. \]
By Proposition~\ref{s.u.pro}, since $\{A_n\}_n$ and $\{A'_n\}_n$ are s.u., for every $M>0$ there exists $n(M)$ such that, for $n\ge n(M)$,
\begin{alignat*}{5}
A_n&=\hat A_{n,M}+\tilde A_{n,M}, & \qquad {\rm rank}(\hat A_{n,M})&\le r(M)(d_n\wedge e_n), & \qquad \|\tilde A_{n,M}\|&\le M,\\
A_n'&=\hat A_{n,M}'+\tilde A_{n,M}', & \qquad {\rm rank}(\hat A_{n,M}')&\le r(M)(d_n'\wedge e_n'), & \qquad \|\tilde A_{n,M}'\|&\le M,
\end{alignat*}
where $\lim_{M\to\infty}r(M)=0$.
Setting $M_m=(\omega(m))^{-1/2}$, for every~$m$ and every $n\ge\max(n_m,n(M_m))$ we have
\begin{align*}
\boxed{B_{n,m}\otimes N'_{n,m}}+\boxed{N_{n,m}\otimes A'_n}&=(A_n-R_{n,m}-N_{n,m})\otimes N'_{n,m}+N_{n,m}\otimes(\hat A'_{n,M_m}+\tilde A'_{n,M_m})\\
&=(\hat A_{n,M_m}+\tilde A_{n,M_m}-R_{n,m}-N_{n,m})\otimes N'_{n,m}+N_{n,m}\otimes\hat A'_{n,M_m}+N_{n,m}\otimes\tilde A'_{n,M_m}\\
&=\bl\boxed{\bk\hat A_{n,M_m}\otimes N'_{n,m}}\bk+\rd\boxed{\bk\tilde A_{n,M_m}\otimes N'_{n,m}}\bk-\bl\boxed{\bk R_{n,m}\otimes N'_{n,m}}\bk-\rd\boxed{\bk N_{n,m}\otimes N'_{n,m}}\bk\\
&\qquad+\bl\boxed{\bk N_{n,m}\otimes\hat A'_{n,M_m}}\bk+\rd\boxed{\bk N_{n,m}\otimes\tilde A'_{n,M_m}}\bk,
\end{align*}
and so
\begin{align*}
A_n\otimes A'_n&=B_{n,m}\otimes B'_{n,m}+B_{n,m}\otimes R'_{n,m}+\boxed{B_{n,m}\otimes N'_{n,m}}+R_{n,m}\otimes A'_n+\boxed{N_{n,m}\otimes A'_n}\\
&=B_{n,m}\otimes B'_{n,m}+B_{n,m}\otimes R'_{n,m}+R_{n,m}\otimes A'_n+\bl\boxed{\bk\hat A_{n,M_m}\otimes N'_{n,m}}\bk+\rd\boxed{\bk\tilde A_{n,M_m}\otimes N'_{n,m}}\bk-\bl\boxed{\bk R_{n,m}\otimes N'_{n,m}}\bk\\
&\qquad-\rd\boxed{\bk N_{n,m}\otimes N'_{n,m}}\bk+\bl\boxed{\bk N_{n,m}\otimes\hat A'_{n,M_m}}\bk+\rd\boxed{\bk N_{n,m}\otimes\tilde A'_{n,M_m}}\bk,
\end{align*}
where, by {\bf P5},
\begin{align*}
&{\rm rank}\Bigl(B_{n,m}\otimes R'_{n,m}+R_{n,m}\otimes A'_n+\bl\boxed{\bk\hat A_{n,M_m}\otimes N'_{n,m}}\bk-\bl\boxed{\bk R_{n,m}\otimes N'_{n,m}}\bk+\bl\boxed{\bk N_{n,m}\otimes\hat A'_{n,M_m}}\hspace{0.75pt}\bk\Bigr)\\
&\le(3c(m)+2r(M_m))(d_n\wedge e_n)(d_n'\wedge e_n')\le(3c(m)+2r(M_m))((d_nd_n')\wedge(e_ne_n')),\\
&\Bigl\|\,\rd\boxed{\bk\tilde A_{n,M_m}\otimes N'_{n,m}}\bk-\rd\boxed{\bk N_{n,m}\otimes N'_{n,m}}\bk+\rd\boxed{\bk N_{n,m}\otimes\tilde A'_{n,M_m}}\,\bk\Bigr\|\\
&\le2(\omega(m))^{1/2}+(\omega(m))^2.
\end{align*}
We conclude that $\{B_{n,m}\otimes B'_{n,m}\}_n\xrightarrow{\rm a.c.s.}\{A_n\otimes A'_n\}_n$ by Definition~\ref{a.c.s.}.
\end{proof}

\begin{corollary}\label{a.c.s.otimes-d}
Let $d$ be a positive integer. For every $i=1,\ldots,d$, let $\{A_n^{(i)}\}_n$ be an s.u.\ sequence of matrices and suppose that $\{B_{n,m}^{(i)}\}_n\xrightarrow{\rm a.c.s.}\{A_n^{(i)}\}_n$. Then,
\[ \{B_{n,m}^{(1)}\otimes\cdots\otimes B_{n,m}^{(d)}\}_n\xrightarrow{\rm a.c.s.}\{A_n^{(1)}\otimes\cdots\otimes A_n^{(d)}\}_n. \]
\end{corollary}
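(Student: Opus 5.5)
The plan is induction on $d$, using Theorem~\ref{a.c.s.otimes} for the inductive step and Proposition~\ref{s.u.tensor} to keep the s.u.\ hypothesis available at every stage. For $d=1$ there is nothing to prove. For $d=2$, the statement is exactly Theorem~\ref{a.c.s.otimes}.

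Now fix $d\ge3$ and assume the corollary holds for $d-1$. I would write
\[ A_n^{(1)}\otimes\cdots\otimes A_n^{(d)}=\bigl(A_n^{(1)}\otimes\cdots\otimes A_n^{(d-1)}\bigr)\otimes A_n^{(d)}, \]
and similarly for the $B$'s, which is legitimate by associativity ({\bf P1}). The induction hypothesis gives
\[ \{B_{n,m}^{(1)}\otimes\cdots\otimes B_{n,m}^{(d-1)}\}_n\xrightarrow{\rm a.c.s.}\{A_n^{(1)}\otimes\cdots\otimes A_n^{(d-1)}\}_n. \]
Applying Theorem~\ref{a.c.s.otimes} to the pair $\{A_n^{(1)}\otimes\cdots\otimes A_n^{(d-1)}\}_n$ and $\{A_n^{(d)}\}_n$ then gives the conclusion. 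This requires both sequences to be s.u.

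The only point needing care is that s.u.\ hypothesis for $\{A_n^{(1)}\otimes\cdots\otimes A_n^{(d-1)}\}_n$. I would establish it by a separate short induction: applying Proposition~\ref{s.u.tensor} repeatedly shows that a finite tensor product of s.u.\ sequences is s.u. Alternatively, I would strengthen the induction statement to include this property. I would also note that the product of matrices whose sizes tend to infinity still has sizes tending to infinity, so it is a genuine sequence of matrices. No step is a real obstacle; the argument is essentially bookkeeping.
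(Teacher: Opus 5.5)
Your proposal is correct and is essentially the paper's proof, which simply says the result follows from Theorem~\ref{a.c.s.otimes} and Proposition~\ref{s.u.tensor}. You spell out the intended induction on $d$: Proposition~\ref{s.u.tensor} is applied repeatedly to keep the partial tensor product s.u., and Theorem~\ref{a.c.s.otimes} handles the inductive step.
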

\begin{proof}
The result follows from Theorem~\ref{a.c.s.otimes} and Proposition~\ref{s.u.tensor}.
\end{proof}

\section{Tensor product of Toeplitz matrices}\label{sec:tpTm}

The purpose of this section is to prove Theorem~\ref{thm:TotimesT=T}, which shows that the tensor product of Toeplitz matrices generated by matrix-valued functions is equal to the Toeplitz matrix generated by the tensor-product function, up to suitable permutation matrices that only depend on the dimensions of the involved Toeplitz matrices.
We remark that the result of Theorem~\ref{thm:TotimesT=T} is already known in the case of Toeplitz matrices generated by univariate scalar functions. Indeed, if $d,n_1,\ldots,n_d$ are positive integers and $f_1,\ldots,f_d:[-\pi,\pi]\to\mathbb C$ are in $L^1([-\pi,\pi])$, then
\begin{equation}\label{known}
T_{n_1}(f_1)\otimes\cdots\otimes T_{n_d}(f_d)=T_\nn(f_1\otimes\cdots\otimes f_d),
\end{equation}
where $\nn=(n_1,\ldots,n_d)$; see \cite[Lemma~3.3]{GLTbookII}. Even in the case of Toeplitz matrices generated by univariate (square) matrix-valued functions, the result of Theorem~\ref{thm:TotimesT=T} is known, though with a non-explicit definition of the involved permutation matrices; see \cite[Lemma~2.45]{bgd}. The novelty of Theorem~\ref{thm:TotimesT=T} consists in the fact that: (a)~the result is now proved for the case of Toeplitz matrices generated by arbitrary multivariate matrix-valued functions; (b)~the involved permutation matrices are clearly defined (and can therefore be explicitly computed).

Before being able to prove Theorem~\ref{thm:TotimesT=T}, some work is needed.
We first point out that any matrix of the form
\begin{equation}\label{mbtm}
[a_{\ii-\jj}]_{\ii,\jj=\bu}^\nn\in\mathbb C^{N(\nn)s\times N(\nn)t},
\end{equation}
where $\nn\in\mathbb N^d$ and $a_\kk\in\mathbb C^{s\times t}$ for every $\kk=-(\nn-\bu),\ldots,\nn-\bu$, is referred to as a ($d$-level block) Toeplitz matrix.
The Toeplitz matrix \eqref{Toep} is just the matrix \eqref{mbtm} in which the blocks $a_\kk$ coincide with the Fourier coefficients $f_\kk$.
For $n\in\mathbb N$ and $k\in\mathbb Z$, let $J_n^{(k)}=T_n(\e^{\i k\theta})$ be the $n\times n$ matrix whose $(i,j)$ entry equals~1 if $i-j=k$ and~0 otherwise, i.e.,
\begin{equation}\label{Jnk}
(J_n^{(k)})_{ij}=\delta_{i-j,k},\qquad i,j=1,\ldots,n,\qquad n\in\mathbb N,\qquad k\in\mathbb Z.
\end{equation}
For $\nn\in\mathbb N^d$ and $\kk\in\mathbb Z^d$, let
\begin{equation}\label{Jnnkk}
J_\nn^{(\kk)}=J_{n_1}^{(k_1)}\otimes\cdots\otimes J_{n_d}^{(k_d)}=T_{n_1}(\e^{\i k_1\theta_1})\otimes\cdots\otimes T_{n_d}(\e^{\i k_d\theta_d})=T_\nn(\e^{\i\kk\cdot\btheta}),
\end{equation}
where in the last equality we used \eqref{known}.
Lemma~\ref{TT-lemma} provides an alternative expression for the Toeplitz matrix \eqref{mbtm}.
We remark that, in the literature, the result of Lemma~\ref{TT-lemma} is often taken as the definition of \eqref{mbtm}; see, e.g., \cite[p.~148, eq.~(1)]{TilliL1}.

\begin{lemma}\label{TT-lemma}
The Toeplitz matrix \eqref{mbtm} admits the following expression:
\begin{equation}\label{ue}
\left[a_{\ii-\jj}\right]_{\ii,\jj=\bu}^\nn=\sum_{\kk=-(\nn-\bu)}^{\nn-\bu}J_\nn^{(\kk)}\otimes a_\kk=\sum_{\kk=-(\nn-\bu)}^{\nn-\bu}T_\nn(\e^{\i\kk\cdot\btheta})\otimes a_\kk.
\end{equation}
\end{lemma}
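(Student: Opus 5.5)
The plan is to compare the two sides entry by entry, using the multi-index formula \textbf{P6}. The second equality in \eqref{ue} is immediate from \eqref{Jnnkk}, since $J_\nn^{(\kk)}=T_\nn(\e^{\i\kk\cdot\btheta})$ for every $\kk$. So only the first equality needs proof.

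First, I rewrite the block matrix $[a_{\ii-\jj}]_{\ii,\jj=\bu}^\nn$ as a scalar matrix indexed by $(d+1)$-indices. Set $\mm=(\nn,s)$ and $\pp=(\nn,t)$. By the definition of lexicographic ordering (the same argument as in \eqref{intui}), the entry of $[a_{\ii-\jj}]_{\ii,\jj=\bu}^\nn$ in position $((\ii,u),(\jj,v))$ is $(a_{\ii-\jj})_{uv}$. Here $\ii,\jj=\bu,\ldots,\nn$, $u=1,\ldots,s$ and $v=1,\ldots,t$, and rows and columns are ordered lexicographically in $\{\bu,\ldots,\mm\}$ and $\{\bu,\ldots,\pp\}$.

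Next, I compute the same entry of the right-hand side. Each summand $J_\nn^{(\kk)}\otimes a_\kk=J_{n_1}^{(k_1)}\otimes\cdots\otimes J_{n_d}^{(k_d)}\otimes a_\kk$ is an $N(\nn)s\times N(\nn)t$ matrix. By \textbf{P6} applied to these $d+1$ factors, together with \eqref{Jnk}, its $((\ii,u),(\jj,v))$ entry is
\[ \delta_{i_1-j_1,k_1}\cdots\delta_{i_d-j_d,k_d}(a_\kk)_{uv}=\delta_{\ii-\jj,\kk}(a_\kk)_{uv}. \]
Now sum over $\kk=-(\nn-\bu),\ldots,\nn-\bu$. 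Since $\ii-\jj$ always lies in this range when $\bu\le\ii,\jj\le\nn$, exactly one term survives, namely $\kk=\ii-\jj$. The sum is therefore $(a_{\ii-\jj})_{uv}$, which matches the left-hand side, and the first equality follows.

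The only delicate point is the index bookkeeping: identifying a $d$-level block matrix with blocks of size $s\times t$ with a scalar matrix indexed lexicographically by $(\nn,s)$ and $(\nn,t)$. This is exactly the reasoning already used to justify \textbf{P6} in \eqref{intui}, so I would invoke it directly rather than re-derive it.
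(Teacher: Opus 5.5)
Your proposal is correct and follows essentially the paper's proof. Both arguments use \textbf{P6} to get $(J_\nn^{(\kk)})_{\ii\jj}=\delta_{\ii-\jj,\kk}$ and then observe that only the term $\kk=\ii-\jj$ survives in the sum. The only difference is that the paper works directly at the level of $s\times t$ blocks, via $(J_\nn^{(\kk)}\otimes a_\kk)_{\ii\jj}=(J_\nn^{(\kk)})_{\ii\jj}\,a_\kk$, which spares the scalar $(d+1)$-index bookkeeping you carry out.
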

\begin{proof}
The second equality in \eqref{ue} is obvious from \eqref{Jnnkk}. The first equality in \eqref{ue} is proved ``blockwise'', by showing that the $s\times t$ block in position $(\ii,\jj)$ of the matrix in the right-hand side is equal to $a_{\ii-\jj}$ for every $\ii,\jj=\bu,\ldots,\nn$.
By \eqref{Jnk}--\eqref{Jnnkk} and {\bf P6}, for every $\ii,\jj=\bu,\ldots,\nn$ we have
\begin{align*}
(J_\nn^{(\kk)})_{\ii\jj}&=(J_{n_1}^{(k_1)}\otimes\cdots\otimes J_{n_d}^{(k_d)})_{\ii\jj}=(J_{n_1}^{(k_1)})_{i_1j_1}\cdots(J_{n_d}^{(k_d)})_{i_dj_d}=\delta_{i_1-j_1,k_1}\cdots\delta_{i_d-j_d,k_d}=\delta_{\ii-\jj,\kk}.
\end{align*}
Therefore,
\[ \Biggl(\,\sum_{\kk=-(\nn-\bu)}^{\nn-\bu}J_\nn^{(\kk)}\otimes a_\kk\Biggr)_{\ii\jj}=\sum_{\kk=-(\nn-\bu)}^{\nn-\bu}(J_\nn^{(\kk)}\otimes a_\kk)_{\ii\jj}=\sum_{\kk=-(\nn-\bu)}^{\nn-\bu}(J_\nn^{(\kk)})_{\ii\jj}a_\kk=\sum_{\kk=-(\nn-\bu)}^{\nn-\bu}\delta_{\ii-\jj,\kk}a_\kk=a_{\ii-\jj}, \]
and \eqref{ue} is proved.
\end{proof}

In what follows, if $d,p_1,\ldots,p_d,q_1,\ldots,q_d$ are positive integers, we denote by $\Pi_{p_1,\ldots,p_d}^{q_1,\ldots,q_d}$ the permutation matrix of size $p_1\cdots p_dq_1\cdots q_d$ given by
\begin{equation}\label{Pi-mu}
\Pi_{p_1,\ldots,p_d}^{q_1,\ldots,q_d}=\Gamma_{p_1,\ldots,p_d,q_1,\ldots,q_d}(\sigma),\qquad\sigma=[1,d+1,2,d+2,3,d+3,\ldots,d,2d];
\end{equation}
see Definition~\ref{Gamma(sigma)} for the definition of $\Gamma_{p_1,\ldots,p_d,q_1,\ldots,q_d}(\sigma)$.
Note that, for all positive integers $d,p_1,\ldots,p_d$, we have
\begin{equation}\label{pi-eq}
\Pi_{p_1,\ldots,p_d}^{1,\ldots,1}=I_{p_1\cdots p_d}.
\end{equation}
Equation~\eqref{pi-eq} follows from the definition of $\Pi_{p_1,\ldots,p_d}^{1,\ldots,1}$ in \eqref{Pi-mu} and from Lemma~\ref{uGamma} applied with $d,n_1,\ldots,n_d,\sigma,\Gamma$ replaced by, respectively, $2d,p_1,\ldots,p_d,1,\ldots,1,[1,d+1,2,d+2,3,d+3,\ldots,d,2d],I_{p_1\cdots p_d}$.

\begin{theorem}\label{thm:TotimesT=T}
Let $d,s_1,\ldots,s_d,t_1,\ldots,t_d$ be positive integers, let $\nn_1,\ldots,\nn_d$ be positive multi-indices, and let $f_i:[-\pi,\pi]^{|\nn_i|}\to\mathbb C^{s_i\times t_i}$ be in $L^1([-\pi,\pi]^{|\nn_i|})$ for every $i=1,\ldots,d$.
Then,
\begin{equation}\label{TotimesT=T}
T_{\nn_1}(f_1)\otimes\cdots\otimes T_{\nn_d}(f_d)=\Pi_{N(\nn_1),\ldots,N(\nn_d)}^{s_1,\ldots,s_d}T_\nn(f_1\otimes\cdots\otimes f_d)(\Pi_{N(\nn_1),\ldots,N(\nn_d)}^{t_1,\ldots,t_d})^T,
\end{equation}
where $\nn=(\nn_1,\ldots,\nn_d)$ and $\Pi_{N(\nn_1),\ldots,N(\nn_d)}^{s_1,\ldots,s_d}$, $\Pi_{N(\nn_1),\ldots,N(\nn_d)}^{t_1,\ldots,t_d}$ are permutation matrices defined by~\eqref{Pi-mu}. In particular, if $s_1=\ldots=s_d=t_1=\ldots=t_d=1$, i.e., the functions $f_1,\ldots,f_d$ are scalar, then
\begin{equation}\label{TotimesT=T-scalar}
T_{\nn_1}(f_1)\otimes\cdots\otimes T_{\nn_d}(f_d)=T_\nn(f_1\otimes\cdots\otimes f_d).
\end{equation}
\end{theorem}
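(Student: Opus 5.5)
The plan is to expand each Toeplitz factor with Lemma~\ref{TT-lemma} and then use Lemma~\ref{GammaLd} to move the scalar shift matrices to the front and the blocks to the back. By Lemma~\ref{TT-lemma}, for every $i=1,\ldots,d$ we have
\[ T_{\nn_i}(f_i)=\sum_{\kk_i=-(\nn_i-\bu)}^{\nn_i-\bu}J_{\nn_i}^{(\kk_i)}\otimes(f_i)_{\kk_i}, \]
where $J_{\nn_i}^{(\kk_i)}\in\mathbb C^{N(\nn_i)\times N(\nn_i)}$ and $(f_i)_{\kk_i}\in\mathbb C^{s_i\times t_i}$. Expanding the tensor product by multilinearity ({\bf P2}) gives
\[ T_{\nn_1}(f_1)\otimes\cdots\otimes T_{\nn_d}(f_d)=\sum_{\kk_1,\ldots,\kk_d}J_{\nn_1}^{(\kk_1)}\otimes(f_1)_{\kk_1}\otimes J_{\nn_2}^{(\kk_2)}\otimes(f_2)_{\kk_2}\otimes\cdots\otimes J_{\nn_d}^{(\kk_d)}\otimes(f_d)_{\kk_d}. \]

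In each summand, label the $2d$ factors as $X_1=J_{\nn_1}^{(\kk_1)},\ldots,X_d=J_{\nn_d}^{(\kk_d)}$ and $X_{d+1}=(f_1)_{\kk_1},\ldots,X_{2d}=(f_d)_{\kk_d}$. The summand is then exactly $X_{\sigma(1)}\otimes\cdots\otimes X_{\sigma(2d)}$ with $\sigma=[1,d+1,2,d+2,\ldots,d,2d]$. Lemma~\ref{GammaLd} is applied with row sizes $N(\nn_1),\ldots,N(\nn_d),s_1,\ldots,s_d$ and column sizes $N(\nn_1),\ldots,N(\nn_d),t_1,\ldots,t_d$. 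It rewrites each summand as
\[ \Pi_{N(\nn_1),\ldots,N(\nn_d)}^{s_1,\ldots,s_d}\bigl(J_{\nn_1}^{(\kk_1)}\otimes\cdots\otimes J_{\nn_d}^{(\kk_d)}\otimes(f_1)_{\kk_1}\otimes\cdots\otimes(f_d)_{\kk_d}\bigr)(\Pi_{N(\nn_1),\ldots,N(\nn_d)}^{t_1,\ldots,t_d})^T, \]
using definition \eqref{Pi-mu}. The essential point is that the permutation matrices do not depend on $\kk=(\kk_1,\ldots,\kk_d)$, so they factor out of the sum.

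Next, simplify the middle factor. Applying {\bf P6} to the tensor product of the $J$'s shows, exactly as in the proof of Lemma~\ref{TT-lemma}, that its $(\ii,\jj)$ entry is $\delta_{\ii-\jj,\kk}$, so $J_{\nn_1}^{(\kk_1)}\otimes\cdots\otimes J_{\nn_d}^{(\kk_d)}=J_\nn^{(\kk)}$ with $\nn=(\nn_1,\ldots,\nn_d)$. By Corollary~\ref{tpfc}, $(f_1)_{\kk_1}\otimes\cdots\otimes(f_d)_{\kk_d}=(f_1\otimes\cdots\otimes f_d)_\kk$. As $(\kk_1,\ldots,\kk_d)$ runs over the product of the ranges, $\kk$ runs over $\{-(\nn-\bu),\ldots,\nn-\bu\}$. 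Hence the middle sum equals
\[ \sum_{\kk=-(\nn-\bu)}^{\nn-\bu}J_\nn^{(\kk)}\otimes(f_1\otimes\cdots\otimes f_d)_\kk, \]
which is $T_\nn(f_1\otimes\cdots\otimes f_d)$ by Lemma~\ref{TT-lemma} and definition \eqref{Toep}. This proves \eqref{TotimesT=T}.

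For the scalar case, \eqref{TotimesT=T-scalar} follows immediately from \eqref{pi-eq}, since both permutation matrices reduce to the identity. The main obstacle is bookkeeping rather than analysis: checking that $\sigma$ and the size lists in Lemma~\ref{GammaLd} match \eqref{Pi-mu} exactly. This includes noting that the $J$'s are square, so the same $N(\nn_i)$ appear on both sides, while the rectangular blocks account for the different superscripts $s$ and $t$.
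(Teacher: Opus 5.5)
Your proposal is correct and follows essentially the same route as the paper: expand each factor via Lemma~\ref{TT-lemma}, distribute by {\bf P2}, reorder each summand with Lemma~\ref{GammaLd} for $\sigma=[1,d+1,2,d+2,\ldots,d,2d]$ (so the $\kk$-independent matrices $\Pi$ factor out of the sum), and recombine with Corollary~\ref{tpfc} and Lemma~\ref{TT-lemma}. The only cosmetic difference is that you check $J_{\nn_1}^{(\kk_1)}\otimes\cdots\otimes J_{\nn_d}^{(\kk_d)}=J_\nn^{(\kk)}$ entrywise via {\bf P6}, which is also immediate from \eqref{Jnnkk} and associativity, whereas the paper states the same identity as \eqref{T(e)oT(e)} and derives it from the scalar result \eqref{known}.
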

\begin{proof}
Equation~\eqref{TotimesT=T-scalar} follows from \eqref{TotimesT=T} and the fact that, if $s_1=\ldots=s_d=t_1=\ldots=t_d=1$, then the permutation matrices $\Pi_{N(\nn_1),\ldots,N(\nn_d)}^{s_1,\ldots,s_d}$, $\Pi_{N(\nn_1),\ldots,N(\nn_d)}^{t_1,\ldots,t_d}$ appearing in \eqref{TotimesT=T} are identity matrices; see \eqref{pi-eq}. We prove \eqref{TotimesT=T}.

We first note that, for every $\kk_1\in\mathbb Z^{|\nn_1|},\ldots,\kk_d\in\mathbb Z^{|\nn_d|}$, we have
\begin{equation}\label{T(e)oT(e)}
T_{\nn_1}(\e^{\i\kk_1\cdot\btheta_1})\otimes\cdots\otimes T_{\nn_d}(\e^{\i\kk_d\cdot\btheta_d})=T_\nn(\e^{\i\kk\cdot\btheta}),
\end{equation}
where $\kk=(\kk_1,\ldots,\kk_d)$. Equation~\eqref{T(e)oT(e)} is obtained by applying \eqref{known} to each factor $T_{\nn_i}(\e^{\i\kk_i\cdot\btheta_i})$ so as to expand it into a tensor product of $|\nn_i|$ Toeplitz matrices generated by scalar univariate functions of the form $T_n(\e^{\i k\theta})$. Once this is done, the tensor product in the left-hand side of \eqref{T(e)oT(e)} has been expanded into a tensor product of $\sum_{i=1}^d|\nn_i|$ Toeplitz matrices of the form $T_n(\e^{\i k\theta})$, and it suffices to apply again \eqref{known} to get \eqref{T(e)oT(e)}.

Now, the proof of \eqref{TotimesT=T} is a matter of computations. We have
\begin{align*}
&T_{\nn_1}(f_1)\otimes\cdots\otimes T_{\nn_d}(f_d)\\
&=\Biggl(\,\sum_{\kk_1=-(\nn_1-\bu)}^{\nn_1-\bu}T_{\nn_1}(\e^{\i\kk_1\cdot\btheta_1})\otimes (f_1)_{\kk_1}\Biggr)\otimes\cdots\otimes\Biggl(\,\sum_{\kk_d=-(\nn_d-\bu)}^{\nn_d-\bu}T_{\nn_d}(\e^{\i\kk_d\cdot\btheta_d})\otimes(f_d)_{\kk_d}\Biggr)\quad\mbox{\footnotesize(by Lemma~\ref{TT-lemma})}\\
&=\sum_{\kk_1=-(\nn_1-\bu)}^{\nn_1-\bu}\cdots\sum_{\kk_d=-(\nn_d-\bu)}^{\nn_d-\bu}T_{\nn_1}(\e^{\i\kk_1\cdot\btheta_1})\otimes (f_1)_{\kk_1}\otimes\cdots\otimes T_{\nn_d}(\e^{\i\kk_d\cdot\btheta_d})\otimes(f_d)_{\kk_d}\quad\mbox{\footnotesize(by {\bf P2})}\\
&=\sum_{\kk=-(\nn-\bu)}^{\nn-\bu}\Pi_{N(\nn_1),\ldots,N(\nn_d)}^{s_1,\ldots,s_d}\Bigl[T_{\nn_1}(\e^{\i\kk_1\cdot\btheta_1})\otimes\cdots\otimes T_{\nn_d}(\e^{\i\kk_d\cdot\btheta_d})\otimes (f_1)_{\kk_1}\otimes\cdots\otimes(f_d)_{\kk_d}\Bigr](\Pi_{N(\nn_1),\ldots,N(\nn_d)}^{t_1,\ldots,t_d})^T\\
&\qquad\mbox{\footnotesize(by definition of $\Pi_{N(\nn_1),\ldots,N(\nn_d)}^{q_1,\ldots,q_d}$ and Lemma~\ref{GammaLd}; we have set $\kk=(\kk_1,\ldots,\kk_d)$)}\\
&=\Pi_{N(\nn_1),\ldots,N(\nn_d)}^{s_1,\ldots,s_d}\left[\sum_{\kk=-(\nn-\bu)}^{\nn-\bu}T_{\nn}(\e^{\i\kk\cdot\btheta})\otimes (f_1\otimes\cdots\otimes f_d)_\kk\right](\Pi_{N(\nn_1),\ldots,N(\nn_d)}^{t_1,\ldots,t_d})^T\quad\mbox{\footnotesize(by \eqref{T(e)oT(e)} and Corollary~\ref{tpfc})}\\
&=\Pi_{N(\nn_1),\ldots,N(\nn_d)}^{s_1,\ldots,s_d}T_\nn(f_1\otimes\cdots\otimes f_d)(\Pi_{N(\nn_1),\ldots,N(\nn_d)}^{t_1,\ldots,t_d})^T\quad\mbox{\footnotesize(by Lemma~\ref{TT-lemma}).}
\end{align*}
This completes the proof of \eqref{TotimesT=T}.
\end{proof}

We conclude this section by observing that \eqref{known} is a special case of \eqref{TotimesT=T-scalar} in which $f_1,\ldots,f_d$ are univariate, i.e., $\nn_i=n_i\in\mathbb N$ for every $i=1,\ldots,d$.

\section{Tensor product of diagonal sampling matrices}\label{sec:tpdsm}

The purpose of this section is to prove Theorem~\ref{thm:DotimesD=D}, which is the version of Theorem~\ref{thm:TotimesT=T} for diagonal sampling matrices to be used in the proof of our main result (Theorem~\ref{thm:GLTotimesGLT=GLT}).
We first note that, if $d,s$ are positive integers, $a:[0,1]^d\to\mathbb C$ and $\nn\in\mathbb N^d$, then
\begin{equation}\label{f1}
D_\nn(aI_s)=\mathop{\rm diag}_{\ii=\bu,\ldots,\nn}a\Bigl(\frac\ii\nn\Bigr)I_s=\biggl[\,\mathop{\rm diag}_{\ii=\bu,\ldots,\nn}a\Bigl(\frac\ii\nn\Bigr)\biggr]\otimes I_s=D_\nn(a)\otimes I_s.
\end{equation}

\begin{theorem}\label{thm:DotimesD=D}
Let $d,s_1,\ldots,s_d$ be positive integers, let $\nn_1,\ldots,\nn_d$ be positive multi-indices, and let $a_i:[0,1]^{|\nn_i|}\to\mathbb C$ for every $i=1,\ldots,d$. Then,
\begin{equation}\label{DotimesD=D}
D_{\nn_1}(a_1I_{s_1})\otimes\cdots\otimes D_{\nn_d}(a_dI_{s_d})=\Pi_{N(\nn_1),\ldots,N(\nn_d)}^{s_1,\ldots,s_d}D_\nn((a_1\otimes\cdots\otimes a_d)I_{s_1\cdots s_d})(\Pi_{N(\nn_1),\ldots,N(\nn_d)}^{s_1,\ldots,s_d})^T,
\end{equation}
where $\nn=(\nn_1,\ldots,\nn_d)$ and $\Pi_{N(\nn_1),\ldots,N(\nn_d)}^{s_1,\ldots,s_d}$ is a permutation matrix defined by~\eqref{Pi-mu}.
In particular, if $s_1=\ldots=s_d=1$, then
\begin{equation}\label{DotimesD=D-scalar}
D_{\nn_1}(a_1)\otimes\cdots\otimes D_{\nn_d}(a_d)=D_\nn(a_1\otimes\cdots\otimes a_d).
\end{equation}
\end{theorem}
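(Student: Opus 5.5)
The plan is to mimic the proof of Theorem~\ref{thm:TotimesT=T}: first establish the scalar identity \eqref{DotimesD=D-scalar}, then obtain \eqref{DotimesD=D} from it using \eqref{f1} and Lemma~\ref{GammaLd}.

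\emph{Step 1 (scalar case).} We prove \eqref{DotimesD=D-scalar} by a direct entrywise computation based on the multi-index formula {\bf P6}. Both sides are square matrices of size $N(\nn)=N(\nn_1)\cdots N(\nn_d)$, indexed by $\ii,\jj=\bu,\ldots,\nn$ in lexicographic order, with $\ii=(\ii_1,\ldots,\ii_d)$ and $\jj=(\jj_1,\ldots,\jj_d)$. By {\bf P6},
\[ (D_{\nn_1}(a_1)\otimes\cdots\otimes D_{\nn_d}(a_d))_{\ii\jj}=\prod_{r=1}^d\delta_{\ii_r\jj_r}a_r\Bigl(\frac{\ii_r}{\nn_r}\Bigr)=\delta_{\ii\jj}\,(a_1\otimes\cdots\otimes a_d)\Bigl(\frac{\ii}{\nn}\Bigr), \]
because $\ii/\nn=(\ii_1/\nn_1,\ldots,\ii_d/\nn_d)$ componentwise. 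The right-hand side is exactly the $(\ii,\jj)$ entry of $D_\nn(a_1\otimes\cdots\otimes a_d)$.

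\emph{Step 2 (block case).} By \eqref{f1}, the left-hand side of \eqref{DotimesD=D} equals $D_{\nn_1}(a_1)\otimes I_{s_1}\otimes\cdots\otimes D_{\nn_d}(a_d)\otimes I_{s_d}$. We apply Lemma~\ref{GammaLd} with $2d$ factors, where $X_r=D_{\nn_r}(a_r)$ and $X_{d+r}=I_{s_r}$ for $r=1,\ldots,d$, and with $\sigma=[1,d+1,2,d+2,\ldots,d,2d]$. All these matrices are square, so the same permutation matrix appears on both sides. This gives the left-hand side in the form
\[ \Pi_{N(\nn_1),\ldots,N(\nn_d)}^{s_1,\ldots,s_d}\bigl(D_{\nn_1}(a_1)\otimes\cdots\otimes D_{\nn_d}(a_d)\otimes I_{s_1}\otimes\cdots\otimes I_{s_d}\bigr)(\Pi_{N(\nn_1),\ldots,N(\nn_d)}^{s_1,\ldots,s_d})^T, \]
by the definition \eqref{Pi-mu}.

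To finish, we use $I_{s_1}\otimes\cdots\otimes I_{s_d}=I_{s_1\cdots s_d}$ together with Step~1 and \eqref{f1} once more to rewrite the middle factor as $D_\nn(a_1\otimes\cdots\otimes a_d)\otimes I_{s_1\cdots s_d}=D_\nn((a_1\otimes\cdots\otimes a_d)I_{s_1\cdots s_d})$. This proves \eqref{DotimesD=D}. Alternatively, \eqref{DotimesD=D-scalar} also follows from \eqref{DotimesD=D} via \eqref{pi-eq}, but proving it first is what feeds Step~2.

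The only delicate point is the bookkeeping in Step~1: we must check that the lexicographic ordering of the concatenated index $\ii$ matches both {\bf P6} and the definition of $D_\nn$, and that the tensor-product function evaluated at the concatenated point $\ii/\nn$ is indeed the product of the individual evaluations. Both checks follow directly from {\bf P6} and \eqref{tpfun}.
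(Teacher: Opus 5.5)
Your proposal is correct and follows the paper's proof essentially step for step. The paper likewise first proves the scalar identity entrywise via \textbf{P6}, then chains \eqref{f1}, Lemma~\ref{GammaLd} with $2d$ factors and $\sigma=[1,d+1,2,d+2,\ldots,d,2d]$, the scalar identity together with $I_{s_1}\otimes\cdots\otimes I_{s_d}=I_{s_1\cdots s_d}$, and \eqref{f1} again, and it likewise notes that \eqref{DotimesD=D-scalar} is recovered from \eqref{DotimesD=D} via \eqref{pi-eq}.
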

\begin{proof}
Equation~\eqref{DotimesD=D-scalar} follows from \eqref{DotimesD=D} and the fact that, if $s_1=\ldots=s_d=1$, then the permutation matrix $\Pi_{N(\nn_1),\ldots,N(\nn_d)}^{s_1,\ldots,s_d}$ appearing in \eqref{DotimesD=D} is the identity matrix; see \eqref{pi-eq}. We prove \eqref{DotimesD=D}.

We first note that
\begin{equation}\label{known-micatanto}
D_{\nn_1}(a_1)\otimes\cdots\otimes D_{\nn_d}(a_d)=D_\nn(a_1\otimes\cdots\otimes a_d).
\end{equation}
Equation~\eqref{known-micatanto} can be proved by a direct componentwise computation, as follows: by {\bf P6}, for every $\ii,\jj=\bu,\ldots,\nn$, if we write $\ii=(\ii_1,\ldots,\ii_d)$ and $\jj=(\jj_1,\ldots,\jj_d)$ with $|\ii_i|=|\jj_i|=|\nn_i|$ for every $i=1,\ldots,d$, then we have
\begin{align*}
(D_{\nn_1}(a_1)\otimes\cdots\otimes D_{\nn_d}(a_d))_{\ii\jj}&=(D_{\nn_1}(a_1))_{\ii_1\jj_1}\cdots(D_{\nn_d}(a_d))_{\ii_d\jj_d}=\delta_{\ii_1\jj_1}a_1\Bigl(\frac{\ii_1}{\nn_1}\Bigr)\cdots \delta_{\ii_d\jj_d}a_d\Bigl(\frac{\ii_d}{\nn_d}\Bigr)\\
&=\delta_{\ii\jj}\hspace{0.75pt}(a_1\otimes\cdots\otimes a_d)\Bigl(\frac\ii\nn\Bigr)=(D_\nn(a_1\otimes\cdots\otimes a_d))_{\ii\jj}.
\end{align*}
This completes the proof of \eqref{known-micatanto}.

Now, the proof of \eqref{DotimesD=D} is a matter of computations. We have
\begin{align*}
D_{\nn_1}(a_1I_{s_1})\otimes\cdots\otimes D_{\nn_d}(a_dI_{s_d})&=D_{\nn_1}(a_1)\otimes I_{s_1}\otimes\cdots\otimes D_{\nn_d}(a_d)\otimes I_{s_d}\quad\mbox{\footnotesize(by \eqref{f1})}\\
&=\Pi_{N(\nn_1),\ldots,N(\nn_d)}^{s_1,\ldots,s_d}\Bigl[D_{\nn_1}(a_1)\otimes\cdots\otimes D_{\nn_d}(a_d)\otimes I_{s_1}\otimes\cdots\otimes I_{s_d}\Bigr](\Pi_{N(\nn_1),\ldots,N(\nn_d)}^{s_1,\ldots,s_d})^T\\
&\qquad\mbox{\footnotesize(by definition of $\Pi_{N(\nn_1),\ldots,N(\nn_d)}^{s_1,\ldots,s_d}$ and Lemma~\ref{GammaLd})}\\
&=\Pi_{N(\nn_1),\ldots,N(\nn_d)}^{s_1,\ldots,s_d}\Bigl[D_{\nn}(a_1\otimes\cdots\otimes a_d)\otimes I_{s_1\cdots s_d}\Bigr](\Pi_{N(\nn_1),\ldots,N(\nn_d)}^{s_1,\ldots,s_d})^T\\
&\qquad\mbox{\footnotesize(by \eqref{known-micatanto} and the equality $I_{s_1}\otimes\cdots\otimes I_{s_d}=I_{s_1\cdots s_d}$)}\\
&=\Pi_{N(\nn_1),\ldots,N(\nn_d)}^{s_1,\ldots,s_d}\Bigl[D_{\nn}((a_1\otimes\cdots\otimes a_d)I_{s_1\cdots s_d})\Bigr](\Pi_{N(\nn_1),\ldots,N(\nn_d)}^{s_1,\ldots,s_d})^T\quad\mbox{\footnotesize(by \eqref{f1}).}
\end{align*}
This completes the proof of \eqref{DotimesD=D}.
\end{proof}

\begin{remark}\label{cancel_out}
The permutation matrices $\Pi_{N(\nn_1),\ldots,N(\nn_d)}^{s_1,\ldots,s_d}$ appearing in Theorems~\ref{thm:TotimesT=T} and~\ref{thm:DotimesD=D} coincide.
This fact will be crucial in the proof of Theorem~\ref{thm:GLTotimesGLT=GLT}.
\end{remark}

\section{Tensor product of GLT sequences}\label{sec:tpGLT}

We are now ready to prove the main result of this paper, i.e., Theorem~\ref{thm:GLTotimesGLT=GLT}. It shows that, if $\{A_{n,1}\}_n,\ldots,\{A_{n,d}\}_n$ are GLT sequences with symbols $\kappa_1,\ldots,\kappa_d$, then $\{A_{n,1}\otimes\cdots\otimes A_{n,d}\}_n$ is a GLT sequence with symbol $\kappa_1\otimes\cdots\otimes\kappa_d$, up to suitable permutation matrices that only depend on the dimensions of the involved matrices $A_{n,1},\ldots,A_{n,d}$.
The permutation matrices in question, which are the same as in Theorem~\ref{thm:TotimesT=T}, are explicitly defined by \eqref{Pi-mu} and can be computed through the recursive formula in Definition~\ref{Gamma(sigma)}.
In practice, Theorem~\ref{thm:GLTotimesGLT=GLT} carries over to GLT sequences the results proved in Theorems~\ref{thm:TotimesT=T} and~\ref{thm:DotimesD=D} for Toeplitz and diagonal sampling matrices. The key ingredients for this transfer process are Definition~\ref{GLT_def}, property {\bf GLT4} and Corollary~\ref{a.c.s.otimes-d}.
Before stating Theorem~\ref{thm:GLTotimesGLT=GLT}, we recall that, by definition, all multi-indices belonging to a given sequence of multi-indices $\{\mm=\mm(n)\}_n$ have the same length; see Section~\ref{min}.

\begin{theorem}\label{thm:GLTotimesGLT=GLT}
Let $d,s_1,\ldots,s_d,t_1,\ldots,t_d$ be positive integers, let $\{\nn_1=\nn_1(n)\}_n,\ldots,\{\nn_d=\nn_d(n)\}_n$ be sequences of positive multi-indices tending to $\infty$, and, for every $i=1,\ldots,d$, let $\{A_{n,i}\}_n\sim_{\rm GLT}\kappa_i$, where $A_{n,i}\in\mathbb C^{N(\nn_i)s_i\times N(\nn_i)t_i}$, $\kappa_i:[0,1]^{|\nn_i|}\times[-\pi,\pi]^{|\nn_i|}\to\mathbb C^{s_i\times t_i}$ is measurable, and $|\nn_i|$ denotes the common length of all multi-indices belonging to the $i$th sequence $\{\nn_i=\nn_i(n)\}_n$. Then,
\begin{equation}\label{GLTotimesGLT=GLT}
\{(\Pi_{N(\nn_1),\ldots,N(\nn_d)}^{s_1,\ldots,s_d})^T(A_{n,1}\otimes\cdots\otimes A_{n,d})\Pi_{N(\nn_1),\ldots,N(\nn_d)}^{t_1,\ldots,t_d}\}_n\sim_{\rm GLT}\kappa(\xx,\btheta)=\kappa_1(\xx_1,\btheta_1)\otimes\cdots\otimes\kappa_d(\xx_d,\btheta_d),
\end{equation}
where $\Pi_{N(\nn_1),\ldots,N(\nn_d)}^{s_1,\ldots,s_d}$, $\Pi_{N(\nn_1),\ldots,N(\nn_d)}^{t_1,\ldots,t_d}$ are permutation matrices defined by~\eqref{Pi-mu}. In \eqref{GLTotimesGLT=GLT}, it is understood that every $\xx=(\xx_1,\ldots,\xx_d)\in[0,1]^{|\nn_1|+\ldots+|\nn_d|}$ is decomposed in such a way that $\xx_i\in[0,1]^{|\nn_i|}$ for each $i=1,\ldots,d$, and every $\btheta=(\btheta_1,\ldots,\btheta_d)\in[-\pi,\pi]^{|\nn_1|+\ldots+|\nn_d|}$ is decomposed in such a way that $\btheta_i\in[-\pi,\pi]^{|\nn_i|}$ for each $i=1,\ldots,d$.
In particular, if $s_1=\ldots=s_d=t_1=\ldots=t_d=1$, i.e., the symbols $\kappa_1,\ldots,\kappa_d$ are scalar, then
\begin{equation}\label{GLTotimesGLT=GLT-scalar}
\{A_{n,1}\otimes\cdots\otimes A_{n,d}\}_n\sim_{\rm GLT}\kappa(\xx,\btheta)=\kappa_1(\xx_1,\btheta_1)\otimes\cdots\otimes\kappa_d(\xx_d,\btheta_d).
\end{equation}
\end{theorem}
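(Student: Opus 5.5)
The plan is to reduce to the building blocks of Definition~\ref{GLT_def} and transfer Theorems~\ref{thm:TotimesT=T} and~\ref{thm:DotimesD=D} through an a.c.s.\ limit, using {\bf GLT4}.

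\emph{Approximants and symbols.} For each $i=1,\ldots,d$, Definition~\ref{GLT_def} gives functions $a^{(i)}_{j,m}$ (continuous a.e.) and $f^{(i)}_{j,m}\in L^1$, $j=1,\ldots,N^{(i)}_m$, with the following properties:
\[ \kappa_{i,m}=\sum_j a^{(i)}_{j,m}\otimes f^{(i)}_{j,m}\to\kappa_i \ \mbox{a.e.}, \qquad \Bigl\{X^{(i)}_{n,m}=\sum_j D_{\nn_i}(a^{(i)}_{j,m}I_{s_i})T_{\nn_i}(f^{(i)}_{j,m})\Bigr\}_n\xrightarrow{\rm a.c.s.}\{A_{n,i}\}_n. \]
Set $\Pi_s=\Pi^{s_1,\ldots,s_d}_{N(\nn_1),\ldots,N(\nn_d)}$ and $\Pi_t=\Pi^{t_1,\ldots,t_d}_{N(\nn_1),\ldots,N(\nn_d)}$. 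The GLT sequences $\{A_{n,i}\}_n$ are s.u.\ by Remark~\ref{GLT->s.u.}, so Corollary~\ref{a.c.s.otimes-d} gives
\[ \{X^{(1)}_{n,m}\otimes\cdots\otimes X^{(d)}_{n,m}\}_n\xrightarrow{\rm a.c.s.}\{A_{n,1}\otimes\cdots\otimes A_{n,d}\}_n. \]
Conjugating by the permutations $\Pi_s^T$ and $\Pi_t$ changes neither ranks nor spectral norms, so the a.c.s.\ relation survives the conjugation.

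\emph{Identifying the approximants as GLT.} Expand $X^{(1)}_{n,m}\otimes\cdots\otimes X^{(d)}_{n,m}$ by bilinearity ({\bf P2}). Each resulting term is $\bigotimes_i D_{\nn_i}(a^{(i)}_{j_i,m}I_{s_i})T_{\nn_i}(f^{(i)}_{j_i,m})$. By {\bf P4} it factors as
\[ \Bigl[\bigotimes_i D_{\nn_i}(a^{(i)}_{j_i,m}I_{s_i})\Bigr]\Bigl[\bigotimes_i T_{\nn_i}(f^{(i)}_{j_i,m})\Bigr]. \]
Theorem~\ref{thm:DotimesD=D} rewrites the first factor as $\Pi_s D_\nn\bigl((\bigotimes_i a^{(i)}_{j_i,m})I_{s_1\cdots s_d}\bigr)\Pi_s^T$. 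Theorem~\ref{thm:TotimesT=T} rewrites the second factor as $\Pi_s T_\nn\bigl(\bigotimes_i f^{(i)}_{j_i,m}\bigr)\Pi_t^T$. The key point is Remark~\ref{cancel_out}: the inner $\Pi_s^T\Pi_s=I$ cancels, which is why the same $\Pi_s$ must appear in both theorems. Hence
\[ \Pi_s^T\bigl(X^{(1)}_{n,m}\otimes\cdots\otimes X^{(d)}_{n,m}\bigr)\Pi_t=\sum_{j_1,\ldots,j_d}D_\nn\Bigl(\bigl(\textstyle\bigotimes_i a^{(i)}_{j_i,m}\bigr)I_{s_1\cdots s_d}\Bigr)\,T_\nn\Bigl(\textstyle\bigotimes_i f^{(i)}_{j_i,m}\Bigr). \]
Here $\bigotimes_i a^{(i)}_{j_i,m}$ is continuous a.e.\ on $[0,1]^{|\nn|}$, since a product of functions of separate variables is continuous off a finite union of null cylinder sets. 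Likewise $\bigotimes_i f^{(i)}_{j_i,m}\in L^1$ by Lemma~\ref{tp-int}. By {\bf GLT2}--{\bf GLT3} (or directly by Definition~\ref{GLT_def}, taking the constant approximating class), this sequence is GLT. Its symbol is $\sum_{j_1,\ldots,j_d}\prod_i a^{(i)}_{j_i,m}(\xx_i)\bigotimes_i f^{(i)}_{j_i,m}(\btheta_i)=\kappa_{1,m}(\xx_1,\btheta_1)\otimes\cdots\otimes\kappa_{d,m}(\xx_d,\btheta_d)$, which is $\kappa_{1,m}\otimes\cdots\otimes\kappa_{d,m}$ with the variables regrouped as in the statement.

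\emph{Conclusion via GLT4.} The symbols converge a.e.: the set where some $\kappa_{i,m}(\xx_i,\btheta_i)\not\to\kappa_i(\xx_i,\btheta_i)$ is a finite union of null cylinders, and the tensor product is continuous. Applying {\bf GLT4} then yields \eqref{GLTotimesGLT=GLT}. The scalar case \eqref{GLTotimesGLT=GLT-scalar} follows from \eqref{pi-eq}.

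The main obstacle is the bookkeeping of the variable orderings in the identification step. The block structure must work out so that the product, by {\bf P4}, of the two permuted expressions really has the form $D_\nn(\cdot\,I)T_\nn(\cdot)$ with compatible sizes; this hinges on the middle $\Pi_s$ factors cancelling, as noted. The other delicate points are the measure-theoretic claims (a.e.\ continuity and a.e.\ convergence on product spaces), but those are routine.
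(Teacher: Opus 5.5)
Your proposal is correct and follows the paper's proof essentially step for step. It uses the same approximants from Definition~\ref{GLT_def}, the a.c.s.\ relation via Remark~\ref{GLT->s.u.} and Corollary~\ref{a.c.s.otimes-d}, the expansion via {\bf P2}, {\bf P4} and Theorems~\ref{thm:TotimesT=T} and~\ref{thm:DotimesD=D} with the middle permutations cancelling, and the conclusion via {\bf GLT4}; your explicit checks that $\bigotimes_i a^{(i)}_{j_i,m}$ is continuous a.e., that $\bigotimes_i f^{(i)}_{j_i,m}\in L^1$, and that $\kappa_m\to\kappa$ a.e.\ supply details the paper leaves implicit when it invokes {\bf GLT3} (and tacitly {\bf GLT2}).
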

\begin{proof}
Equation~\eqref{GLTotimesGLT=GLT-scalar} follows from \eqref{GLTotimesGLT=GLT} and the fact that, if $s_1=\ldots=s_d=t_1=\ldots=t_d=1$, then the permutation matrices $\Pi_{N(\nn_1),\ldots,N(\nn_d)}^{s_1,\ldots,s_d}$, $\Pi_{N(\nn_1),\ldots,N(\nn_d)}^{t_1,\ldots,t_d}$ appearing in \eqref{GLTotimesGLT=GLT} are identity matrices; see \eqref{pi-eq}. We prove \eqref{GLTotimesGLT=GLT}.

Set $\ss=(s_1,\ldots,s_d)$, $\tt=(t_1,\ldots,t_d)$, $\nn=(\nn_1,\ldots,\nn_d)$, and denote by $|\nn|=|\nn_1|+\ldots+|\nn_d|$ the common length of all multi-indices belonging to the sequence $\{\nn=\nn(n)\}_n$. Then, for every $n$ and every $(\xx,\btheta)\in[0,1]^{|\nn|}\times[-\pi,\pi]^{|\nn|}$, we have
\begin{align*}
(\Pi_{N(\nn_1),\ldots,N(\nn_d)}^{s_1,\ldots,s_d})^T(A_{n,1}\otimes\cdots\otimes A_{n,d})\Pi_{N(\nn_1),\ldots,N(\nn_d)}^{t_1,\ldots,t_d}&\in\mathbb C^{N(\nn)N(\ss)\times N(\nn)N(\tt)},\\
\kappa(\xx,\btheta)&\in\mathbb C^{N(\ss)\times N(\tt)}.
\end{align*}
Moreover, $\nn\to\infty$ as $n\to\infty$ and $\kappa:[0,1]^{|\nn|}\times[-\pi,\pi]^{|\nn|}\to\mathbb C^{N(\ss)\times N(\tt)}$ is measurable. Thus, the GLT relation \eqref{GLTotimesGLT=GLT} can hold as the sequence of matrices in the left-hand side and the function in the right-hand side satisfy the conditions of Definition~\ref{GLT_def}. We want to show that the GLT relation \eqref{GLTotimesGLT=GLT} actually holds.

By Definition~\ref{GLT_def} applied to the GLT sequences $\{A_{n,1}\}_n,\ldots,\{A_{n,d}\}_n$, for every $i=1,\ldots,d$, there exist functions $a_{j,m}^{(i)}$, $f_{j,m}^{(i)}$, $j=1,\ldots,N_m^{(i)}$, such that
\begin{itemize}[nolistsep,leftmargin=*]
	\item $a_{j,m}^{(i)}:[0,1]^{|\nn_i|}\to\mathbb C$ is continuous a.e.\ and $f_{j,m}^{(i)}:[-\pi,\pi]^{|\nn_i|}\to\mathbb C^{s_i\times t_i}$ belongs to $L^1([-\pi,\pi]^{|\nn_i|})$,
	\item $\kappa_m^{(i)}(\xx_i,\btheta_i)=\sum_{j=1}^{N_m^{(i)}}a_{j,m}^{(i)}(\xx_i)f_{j,m}^{(i)}(\btheta_i)\to\kappa_i(\xx_i,\btheta_i)$ a.e.\ on $[0,1]^{|\nn_i|}\times[-\pi,\pi]^{|\nn_i|}$,
	\item $\{B_{n,m}^{(i)}\}_n=\{\sum_{j=1}^{N_m^{(i)}}D_{\nn_i}(a_{j,m}^{(i)}I_{s_i})T_{\nn_i}(f_{j,m}^{(i)})\}\xrightarrow{\rm a.c.s.}\{A_{n,i}\}_n$.
\end{itemize}
Consider the matrix $B_{n,m}^{(1)}\otimes\cdots\otimes B_{n,m}^{(d)}$, which can be written as follows:
\begin{align*}
&B_{n,m}^{(1)}\otimes\cdots\otimes B_{n,m}^{(d)}=\sum_{j_1=1}^{N_m^{(1)}}\cdots\sum_{j_d=1}^{N_m^{(d)}}D_{\nn_1}(a_{j_1,m}^{(1)}I_{s_1})T_{\nn_1}(f_{j_1,m}^{(1)})\otimes\cdots\otimes D_{\nn_d}(a_{j_d,m}^{(d)}I_{s_d})T_{\nn_d}(f_{j_d,m}^{(d)})\quad\mbox{\footnotesize(by {\bf P2})}\\
&=\sum_{j_1=1}^{N_m^{(1)}}\cdots\sum_{j_d=1}^{N_m^{(d)}}(D_{\nn_1}(a_{j_1,m}^{(1)}I_{s_1})\otimes\cdots\otimes D_{\nn_d}(a_{j_d,m}^{(d)}I_{s_d}))(T_{\nn_1}(f_{j_1,m}^{(1)})\otimes\cdots\otimes T_{\nn_d}(f_{j_d,m}^{(d)}))\quad\mbox{\footnotesize(by {\bf P4})}\\
&=\sum_{j_1=1}^{N_m^{(1)}}\cdots\sum_{j_d=1}^{N_m^{(d)}}\Pi_{N(\nn_1),\ldots,N(\nn_d)}^{s_1,\ldots,s_d}D_\nn((a_{j_1,m}^{(1)}\otimes\cdots\otimes a_{j_d,m}^{(d)})I_{s_1\cdots s_d})(\Pi_{N(\nn_1),\ldots,N(\nn_d)}^{s_1,\ldots,s_d})^T\cdot\\
&\qquad{}\cdot\Pi_{N(\nn_1),\ldots,N(\nn_d)}^{s_1,\ldots,s_d}T_\nn(f_{j_1,m}^{(1)}\otimes\cdots\otimes f_{j_d,m}^{(d)})(\Pi_{N(\nn_1),\ldots,N(\nn_d)}^{t_1,\ldots,t_d})^T\quad\mbox{\footnotesize(by Theorems~\ref{thm:TotimesT=T} and~\ref{thm:DotimesD=D})}\\
&=\Pi_{N(\nn_1),\ldots,N(\nn_d)}^{s_1,\ldots,s_d}\left[\sum_{j_1=1}^{N_m^{(1)}}\cdots\sum_{j_d=1}^{N_m^{(d)}}D_\nn((a_{j_1,m}^{(1)}\otimes\cdots\otimes a_{j_d,m}^{(d)})I_{s_1\cdots s_d})T_\nn(f_{j_1,m}^{(1)}\otimes\cdots\otimes f_{j_d,m}^{(d)})\right](\Pi_{N(\nn_1),\ldots,N(\nn_d)}^{t_1,\ldots,t_d})^T.
\end{align*}
Note that the permutation matrices in the middle of the second-to-last expression have disappeared as they are inverses of each other; see also Remark~\ref{cancel_out}.
Let us now make the following observations.
\begin{itemize}[nolistsep,leftmargin=*]
	\item For every $m$, we have
	\begin{align}\label{GLT4.1}
	&\{(\Pi_{N(\nn_1),\ldots,N(\nn_d)}^{s_1,\ldots,s_d})^T(B_{n,m}^{(1)}\otimes\cdots\otimes B_{n,m}^{(d)})\Pi_{N(\nn_1),\ldots,N(\nn_d)}^{t_1,\ldots,t_d}\}_n\notag\\
	&\sim_{\rm GLT}\sum_{j_1=1}^{N_m^{(1)}}\cdots\sum_{j_d=1}^{N_m^{(d)}}(a_{j_1,m}^{(1)}(\xx_1)\cdots a_{j_d,m}^{(d)}(\xx_d))(f_{j_1,m}^{(1)}(\btheta_1)\otimes\cdots\otimes f_{j_d,m}^{(d)}(\btheta_d))\quad\mbox{\footnotesize(by {\bf GLT3})}\notag\\
	&=\sum_{j_1=1}^{N_m^{(1)}}\cdots\sum_{j_d=1}^{N_m^{(d)}}(a_{j_1,m}^{(1)}(\xx_1)f_{j_1,m}^{(1)}(\btheta_1)\otimes\cdots\otimes a_{j_d,m}^{(d)}(\xx_d)f_{j_d,m}^{(d)}(\btheta_d))\quad\mbox{\footnotesize(by {\bf P2})}\notag\\
&=\left(\sum_{j_1=1}^{N_m^{(1)}}a_{j_1,m}^{(1)}(\xx_1)f_{j_1,m}^{(1)}(\btheta_1)\right)\otimes\cdots\otimes\left(\sum_{j_d=1}^{N_m^{(d)}}a_{j_d,m}^{(d)}(\xx_d)f_{j_d,m}^{(d)}(\btheta_d)\right)\quad\mbox{\footnotesize(by {\bf P4})}\notag\\
	&=\kappa_m^{(1)}(\xx_1,\btheta_1)\otimes\cdots\otimes\kappa_m^{(d)}(\xx_d,\btheta_d)=\kappa_m(\xx,\btheta),
	\end{align}
	where the last equality is just the definition of the function $\kappa_m(\xx,\btheta)$.
	\item We have
	\begin{equation}\label{GLT4.2}
	\kappa_m(\xx,\btheta)\to\kappa(\xx,\btheta)\ \,\mbox{a.e.\ on}\ \,[0,1]^{|\nn|}\times[-\pi,\pi]^{|\nn|}
	\end{equation}
	because $\kappa_m^{(i)}(\xx_i,\btheta_i)\to\kappa_i(\xx_i,\btheta_i)$ a.e.\ on $[0,1]^{|\nn_i|}\times[-\pi,\pi]^{|\nn_i|}$ for every $i=1,\ldots,d$.
	\item Since the GLT sequences $\{A_{n,1}\}_n,\ldots,\{A_{n,d}\}_n$ are s.u.\ by Remark~\ref{GLT->s.u.}, we have
	\[ \{B_{n,m}^{(1)}\otimes\cdots\otimes B_{n,m}^{(d)}\}_n\xrightarrow{\rm a.c.s.}\{A_{n,1}\otimes\cdots\otimes A_{n,d}\}_n \]
	by Corollary~\ref{a.c.s.otimes-d}. Thus, we also have
	\begin{align}\label{GLT4.3}
	&\{(\Pi_{N(\nn_1),\ldots,N(\nn_d)}^{s_1,\ldots,s_d})^T(B_{n,m}^{(1)}\otimes\cdots\otimes B_{n,m}^{(d)})\Pi_{N(\nn_1),\ldots,N(\nn_d)}^{t_1,\ldots,t_d}\}_n\notag\\
	&\xrightarrow{\rm a.c.s.}\{\{(\Pi_{N(\nn_1),\ldots,N(\nn_d)}^{s_1,\ldots,s_d})^T(A_{n,1}\otimes\cdots\otimes A_{n,d})\Pi_{N(\nn_1),\ldots,N(\nn_d)}^{t_1,\ldots,t_d}\}_n
	\end{align}
	by definition of a.c.s.\ (Definition~\ref{a.c.s.}), because permutation matrices do not alter the rank and the norm of matrices.
\end{itemize}
Using {\bf GLT4} in combination with \eqref{GLT4.1}--\eqref{GLT4.3}, we obtain \eqref{GLTotimesGLT=GLT}.
\end{proof}

\section{Applications}\label{sec:a}


In this section, we discuss some applications of the main result (Theorem~\ref{thm:GLTotimesGLT=GLT}).

We first observe that the ``purely Toeplitz version'' of Theorem~\ref{thm:GLTotimesGLT=GLT}, i.e., Theorem~\ref{thm:TotimesT=T}, has already proved to be useful in applications.
For example, it was used in \cite{bgd} for developing the theory of multilevel block GLT sequences and in \cite{simax2015} for the analysis of stiffness matrices arising from Lagrangian finite element discretizations of elliptic problems. We remark that both \cite{bgd,simax2015} made use of simplified versions of Theorem~\ref{thm:TotimesT=T} and did not provide an explicit construction of the involved permutation matrices. In other words, Theorem~\ref{thm:TotimesT=T} is a generalization of the simplified results used in \cite{bgd,simax2015} and, moreover, it provides a clear definition of the involved permutation matrices.

Just like Theorem~\ref{thm:TotimesT=T}, even Theorem~\ref{thm:GLTotimesGLT=GLT} has already been used in recent publications~\cite{rositaA,rositaB}.
In particular, it was used in \cite{rositaA} for the 
analysis of preconditioned Toeplitz matrices and in \cite{rositaB} for the 
analysis of two-by-two block linear systems arising from the discretization of space-time fractional diffusion equations.
Both \cite{rositaA,rositaB} leveraged on a simplified version of Theorem~\ref{thm:GLTotimesGLT=GLT}, which was presented therein without a proof.
In this regard, Theorem~\ref{thm:GLTotimesGLT=GLT} provides the first mathematical proof of (a generalized version of) a result, which has already been used in recent literature.

Besides the aforementioned applications, Theorem~\ref{thm:GLTotimesGLT=GLT} can be used for computing the spectral and singular value distribution of sequences of matrices arising from the discretization of partial differential equations (PDEs) whenever such matrices show a tensor-product structure. This usually happens if a tensor-product discretization is adopted for the PDE under consideration.
For example, consider the $d$-dimensional Poisson problem\,\footnote{\,For all the details of the following derivation, we refer the reader to \cite[Section~7.6]{GLTbookII}.}
\begin{equation}\label{Poisson}
\begin{cases}
-\Delta u=f, &\mbox{on $(0,1)^d$,}\\
u=0, &\mbox{on $\partial(0,1)^d$.}
\end{cases}
\end{equation}
Suppose we discretize \eqref{Poisson} by the Galerkin finite element method based on tensor-product B-splines of degree $\pp=(p_1,\ldots,p_d)\in\mathbb N^d$ over the uniform grid $\ii/\nn=(i_1/n_1,\ldots,i_d/n_d)$, $\ii=\bz,\ldots,\nn$, where $\nn=\nn(n)\in\mathbb N^d$ depends on a mesh fineness parameter $n$ and $\nn\to\infty$ as $n\to\infty$.
In this case, the actual computation of the numerical solution reduces to solving a linear system with coefficient matrix given by
\begin{equation}\label{d_matrix}
A_\nn^{(\pp)}=\sum_{r=1}^dM_{n_1}^{(p_1)}\otimes\cdots\otimes M_{n_{r-1}}^{(p_{r-1})}\otimes K_{n_r}^{(p_r)}\otimes M_{n_{r+1}}^{(p_{r+1})}\otimes\cdots\otimes M_{n_d}^{(p_d)},
\end{equation}
where, for all positive integers $m,p\ge1$, $K_m^{(p)}$ and $M_m^{(p)}$ are real symmetric matrices defined as follows:
\begin{align}
\label{Kmatrix}K_m^{(p)}=\left[\int_0^1B_{j+1,p,m}'(x)B_{i+1,p,m}'(x){\rm d}x\right]_{i,j=1}^{m+p-2},\\
\label{Mmatrix}M_m^{(p)}=\left[\int_0^1B_{j+1,p,m}(x)B_{i+1,p,m}(x){\rm d}x\right]_{i,j=1}^{m+p-2},
\end{align}
with $B_{2,p,m},\ldots,B_{n+p-1,p,m}$ being the B-splines of degree $p$ defined over the uniform grid $i/m$, $i=0,\ldots,m$, and vanishing on the boundary of the interval $(0,1)$.
From the GLT analysis of the matrices $K_m^{(p)}$ and $M_m^{(p)}$, we know that, for every $p\ge1$,
\begin{align}
\label{K-GLT}\{K_m^{(p)}\}_m&\sim_{\rm GLT}f_p,\\
\label{M-GLT}\{M_m^{(p)}\}_m&\sim_{\rm GLT}h_p,
\end{align}
where $f_p,h_p:[-\pi,\pi]\to\mathbb R$ are real trigonometric polynomials that are explicitly known in closed form.
As a consequence of \eqref{K-GLT}--\eqref{M-GLT}, we have, for every $\pp\in\mathbb N^d$,
\begin{alignat}{3}
\label{Ki-GLT}\{K_{n_i}^{(p_i)}\}_n&\sim_{\rm GLT}f_{p_i},&\qquad i=1,\ldots,d,\\
\label{Mi-GLT}\{M_{n_i}^{(p_i)}\}_n&\sim_{\rm GLT}h_{p_i},&\qquad i=1,\ldots,d.
\end{alignat}
We can now apply the main Theorem~\ref{thm:GLTotimesGLT=GLT} in combination with {\bf GLT3} to conclude that, for every $\pp\in\mathbb N^d$,
\begin{equation}\label{GLTeasy}
\{A_\nn^{(\pp)}\}_n\sim_{\rm GLT}\sum_{r=1}^dh_{p_1}\otimes\cdots\otimes h_{p_{r-1}}\otimes f_{p_r}\otimes h_{p_{r+1}}\otimes\cdots\otimes h_{p_d}.
\end{equation}
Since each matrix $A_\nn^{(p)}$ is a sum of tensor products of real symmetric matrices, it is itself a real symmetric matrix by {\bf P3}.
Thus, by \eqref{GLTeasy} and {\bf GLT1},
\begin{equation}\label{GLTeasy'}
\{A_\nn^{(\pp)}\}_n\sim_{\sigma,\lambda}\sum_{r=1}^d h_{p_1}\otimes\cdots\otimes h_{p_{r-1}}\otimes f_{p_r}\otimes h_{p_{r+1}}\otimes\cdots\otimes h_{p_d}. 
\end{equation}
We remark that the GLT relation \eqref{GLTeasy}, which was obtained in \cite[Section~7.6]{GLTbookII} with a certain effort, has now been obtained directly from the main Theorem~\ref{thm:GLTotimesGLT=GLT} and \eqref{Ki-GLT}--\eqref{Mi-GLT}. In other words, Theorem~\ref{thm:GLTotimesGLT=GLT} allows us to reduce the (complicated) GLT analysis of a $d$-dimensional matrix such as $A_\nn^{(\pp)}$ to the (easy) GLT analysis of its ``unidimensional pieces'' $K_m^{(p)}$ and $M_m^{(p)}$.

\section{Conclusions}\label{sec:c}

In our main result (Theorem~\ref{thm:GLTotimesGLT=GLT}), we have proved that, if $\{A_{n,1}\}_n,\ldots,\{A_{n,d}\}_n$ are GLT sequences with symbols $\kappa_1,\ldots,\kappa_d$, their tensor product $\{A_{n,1}\otimes\cdots\otimes A_{n,d}\}_n$ is a GLT sequence with symbol $\kappa_1\otimes\cdots\otimes\kappa_d$, up to suitable permutation matrices that only depend on the dimensions of the involved matrices $A_{n,1},\ldots,A_{n,d}$. Moreover, the permutation matrices in question are explicitly defined by \eqref{Pi-mu} and can be computed through the recursive formula in Definition~\ref{Gamma(sigma)}.
While proving Theorem~\ref{thm:GLTotimesGLT=GLT}, we have also proved other new results that are significant enough to be considered as further main results of this paper in addition to Theorem~\ref{thm:GLTotimesGLT=GLT}, namely Theorems~\ref{a.c.s.otimes}, \ref{thm:TotimesT=T}, \ref{thm:DotimesD=D} and Corollary~\ref{a.c.s.otimes-d}.
After proving Theorem~\ref{thm:GLTotimesGLT=GLT}, we have discussed some applications in Section~\ref{sec:a}.

We conclude this paper by suggesting a possible future line of research. Besides tensor products, another important matrix operation is the direct sum.
If $X\in\mathbb C^{m\times n}$ and $Y\in\mathbb C^{p\times q}$, the direct sum of $X$ and $Y$ is the $(n+p)\times(m+q)$ matrix defined by
\[ X\oplus Y={\rm diag}(X,Y)=\begin{bmatrix}X & O\\ O & Y\end{bmatrix}. \]
Let $\{A_{n,1}\}_n,\ldots,\{A_{n,d}\}_n$ be GLT sequences with symbols $\kappa_1,\ldots,\kappa_d$, and consider their direct sum $\{A_{n,1}\oplus\cdots\oplus A_{n,d}\}_n$.
Can we say that $\{A_{n,1}\oplus\cdots\oplus A_{n,d}\}_n$ is a GLT sequence? If yes, which is the relation between its symbol and the symbols $\kappa_1,\ldots,\kappa_d$?
Answering to these questions would be useful in the analysis of block matrices formed by ``GLT blocks'' such as those considered in \cite{blocking}. In fact, a partial answer in the case where $A_{n,1},\ldots,A_{n,d}$ are pure ($1$-level block) Toeplitz matrices was provided in \cite[Theorem~3.12]{blocking}.
Providing a definitive answer in the case of arbitrary GLT sequences is still an open problem, whose solution may form the content of a future research.


\section*{Acknowledgements}

{\footnotesize

The author is member of the research group GNCS (Gruppo Nazionale per il Calcolo Scientifico) of INdAM (Istituto Nazionale di Alta Matematica).
This work was supported by the Department of Mathematics of the University of Rome Tor Vergata through the projects MatMod@TOV (MUR excellence department project, CUP E83C23000330006) and METRO (Methods and modEls for arTificial neuRal netwOrks, CUP E83C25000630005).}

\end{document}